\theoremstyle{definition}
\newtheorem{thm}{Theorem}[section]
\newtheorem{prop}[thm]{Proposition}
\newtheorem{lem}[thm]{Lemma}
\newtheorem{cor}[thm]{Corollary}
\newtheorem{dfn}[thm]{Definition}
\newtheorem{rem}[thm]{Remark}
\newtheorem{prob}[thm]{Problem}
\newtheorem{exam}[thm]{Example}
\subjclass[2020]{14B10, 14E15, 14M25}
\newcommand{\C}{\mathbb{C}}
\newcommand{\R}{\mathbb{R}}
\newcommand{\Z}{\mathbb{Z}}
\newcommand{\Q}{\mathbb{Q}}
\newcommand{\A}{\mathbb{A}}
\newcommand{\Spec}{\operatorname{Spec}}
\newcommand{\FB}{\operatorname{FB}}
\newcommand{\Hilb}{\operatorname{Hilb}}
\newcommand{\Cone}{\operatorname{Cone}}
\newcommand{\Conv}{\operatorname{Conv}}
\newcommand{\mult}{\operatorname{mult}}
\newcommand{\Gen}{\operatorname{Gen}}
\newcommand{\GL}{\operatorname{GL}}
\newcommand{\diag}{\operatorname{diag}}
\begin{document}
\title{Properties of moderate toric resolutions in dimension three}
\author{Yutaro Kaijima and Yudai Yamamoto}
\address{Department of Mathematics, Graduate School of Science, Osaka University Toyonaka, Osaka 560-0043, JAPAN}
\email{u797381f@ecs.osaka-u.ac.jp}
\address{Department of Mathematics, Graduate School of Science, Osaka University Toyonaka, Osaka 560-0043, JAPAN}
\email{yudai.yamamoto.math@gmail.com}

\begin{abstract}
We study moderate toric resolutions introduced by Ch\'avez-Mart\'inez, Duarte and Yasuda, which appears in the relation between F-blowups and essential divisors. In particular, we address the problems, when it exists, and if it is the case, what properties it has in conjunction with the birational geometry and Hilbert basis resolutions, mainly in dimension three.
\end{abstract}

\maketitle

\section{Introduction}

For a non-negative integer $e$, the $e$-th F-blowup $\FB_e(X)$ of a variety $X$ in positive characteristic, introduced by Yasuda \cite{Yas12}, defined to be the universal birational flattening of the $e$-th  iterate of the Frobenius morphism. A natural problem on the F-blowups is when $\FB_e(X)$ become a (crepant) resolution for $e\gg0$ ? There are several known results on this problem, either positive or negative, obtained for example in \cite{Yas12,TY09,Har12,Har15,HS11,HSY13,LY24}. Since the answer is not always positive, we may consider the following problem as a next step.
\begin{prob}\label{essential divisor}
For a normal variety $X$ and $e\gg0$, do all essential divisors over $X$ appear on $\FB_e(X)$ as prime divisors ?
\end{prob}
Here, an \emph{essential divisor} over $X$ means a prime divisor which appears on every resolution of $X$. The notion of essential divisors is famous for its appearance in the Nash problem on families of arcs (for example, see \cite{PS15}). Since each point of the F-blowups can be regarded as a jet \cite[Introduction]{Yas12}, Problem \ref{essential divisor} can be viewed as an analogue of the Nash problem.

In this paper, we consider Problem \ref{essential divisor} in the case of normal affine toric varieties. In this case, the Frobenius-like morphism can be defined even when the charactersitic of the base field is zero. Accordingly, we can define F-blowups of normal affine toric varieties in arbitrary characteristic.
\begin{dfn}[\cite{Yas12}]
Let $M$ and $N$ be free abelian groups of rank $d$ which are dual to each other and let $\sigma\subseteq N_\R\coloneqq N\otimes\R$ be a strongly convex, rational polyhedral cone. Let $X\coloneqq\Spec k[\sigma^\vee\cap M]$ be the associated toric variety. For $l\in\Z_{>0}$, we define the \emph{$l$-th F-blowup} $\FB_{(l)}(X)$ that the universal birational flattening of the morphism
\begin{equation*}
F_{(l)}\colon\Spec k[\sigma^\vee\cap(1/l)M]\to X
\end{equation*}
which corresponds to the inclusion $\sigma^\vee\cap M\hookrightarrow\sigma^\vee\cap(1/l)M$.
\end{dfn}
In \cite{Yas12}, it was shown that the sequence of F-blowups $(\FB_{(l)}(X))_{l\in\Z_{>0}}$ as well as the sequence of the normalizations of F-blowups $(\widetilde{\FB}_{(l)}(X))_{l\in\Z_{>0}}$ stabilizes. Thus, we write $\FB_{(l)}(X)$ and the normalization $\widetilde{\FB}_{(l)}(X)$ of $\FB_{(l)}(X)$ for sufficiently large integer $l$ as $\FB_{(\infty)}(X)$ and $\widetilde{\FB}_{(\infty)}(X)$, respectively. Note that if the characteristic of $k$ is $p>0$, then $\FB_{(p^e)}(X)\cong\FB_e(X)$.

In this paper, we mainly consider the following version of essential divisors due to Bouvier and Gonzalez-Sprinberg.
\begin{dfn}[\cite{BGS95}]
We say that a torus invariant divisor over a normal affine toric variety $X$ is a \emph{BGS essential divisor} if it appears on every toric resolution of $X$ as a prime divisor.
\end{dfn}
One of the properties of BGS essential divisors is that there is a one-to-one correspondence between the elements of the Hilbert basis of the cone and the BGS essential divisors over the normal affine toric variety associated to the cone \cite[Theorem 1.10]{BGS95}. Recently, Ch\'avez-Mart\'inez, Duarte and Yasuda gave an affirmative answer to Problem \ref{essential divisor} by using the following notion.
\begin{dfn}[\cite{CMDY24}]\label{moderate toric resolution intro}
Let $\Sigma$ be a subdivision of strongly convex, nondegenerate, rational polyhedral cone $\sigma\subseteq N_\R$. We say that $\Sigma$ is a \emph{moderate toric resolution} of $\sigma$ if every nondegenerate cone $\tau$ of $\Sigma$ is smooth and the affine hyperplane spanned by the minimal generators of $\tau$ intersects with every one-dimensional face of $\sigma$. We also call the induced morphism of schemes $X_\Sigma\to X_\sigma$ a moderate toric resolution.
\end{dfn}
\begin{thm}[{\cite[Corollary 6.9]{CMDY24}}]
Let $X$ be an affine toric variety associated to a cone $\sigma\subseteq N_\R$ which is a strongly convex, rational polyhedral and nondegenerate. If $X$ admits a moderate toric resolution, then every BGS essential divisor over $X$ appears on $\widetilde{\FB}_{(\infty)}(X)$ as a prime divisor.
\end{thm}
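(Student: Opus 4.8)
The plan is to work entirely at the level of fans. Recall the standard fact that the $l$-th F-blowup of a toric variety is again toric and that its normalization corresponds to an explicit subdivision of $\sigma$: the $\mathcal{O}_X$-module $(F_{(l)})_\ast\mathcal{O}_{\Spec k[\sigma^\vee\cap(1/l)M]}$ is the direct sum over the cosets $\bar\gamma\in(1/l)M/M$ of the fractional monomial ideals $\bigoplus_{\delta\in\sigma^\vee\cap(1/l)M,\ \delta\equiv\gamma}k\,x^{\delta}$, so that $\widetilde{\FB}_{(l)}(X)$ has fan $\bigvee_{\bar\gamma}\Sigma_{\bar\gamma}$, where $\Sigma_{\bar\gamma}$ is the subdivision of $\sigma$ into the maximal domains of linearity of the concave, positively homogeneous, piecewise-linear function $\psi_{\bar\gamma}(u)=\min\{\langle u,\delta\rangle:\delta\in\sigma^\vee\cap(1/l)M,\ \delta\equiv\gamma\bmod M\}$. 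By the stabilization of \cite{Yas12} this fan equals, for $l\gg0$, the fan $\Sigma_\infty$ of $\widetilde{\FB}_{(\infty)}(X)$. Since prime invariant divisors of a normal toric variety correspond to the rays of its fan, and since by \cite[Theorem 1.10]{BGS95} the BGS essential divisors over $X$ are exactly those attached to the Hilbert basis elements of the semigroup $\sigma\cap N$, the theorem reduces to the statement that every Hilbert basis element $v$ of $\sigma\cap N$ spans a ray of $\Sigma_\infty$.

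First I would extract the relevant data from a moderate toric resolution $\Sigma$. Fix such a $v$ and a maximal---hence $d$-dimensional, hence smooth---cone $\tau=\Cone(e_1,\dots,e_d)$ of $\Sigma$ with $v\in\tau$, where $(e_1,\dots,e_d)$ is a $\Z$-basis of $N$. Writing $v=\sum a_ie_i$ with $a_i\in\Z_{\geq0}$ and using that each $e_i$ is a nonzero element of $\sigma\cap N$, irreducibility of $v$ in that semigroup forces $\sum a_i=1$, so $v$ is a ray generator of $\tau$; reindex so $v=e_1$. The affine hyperplane $H_\tau$ spanned by $e_1,\dots,e_d$ equals $\{u\in N_\R:\langle m_\tau,u\rangle=1\}$ with $m_\tau:=e_1^\ast+\dots+e_d^\ast\in M$, and the defining condition of a moderate toric resolution---that $H_\tau$ meet every ray of $\sigma$---translates to $\langle m_\tau,w\rangle>0$ for every primitive ray generator $w$ of $\sigma$, i.e.\ to $m_\tau\in\operatorname{int}(\sigma^\vee)\cap M$. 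Thus $v=e_1$ is a ray generator of a smooth cone $\tau\in\Sigma$ equipped with an interior lattice functional $m_\tau$ satisfying $\langle m_\tau,e_i\rangle=1$ for all $i$; note that the facets of $\tau$ through $v$ lie in the hyperplanes $(e_j^\ast)^\perp$ for $j=2,\dots,d$, and that $e_2^\ast,\dots,e_d^\ast$ is a $\Z$-basis of $v^\perp\cap M$.

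The core step is to see that $\R_{\geq0}v\in\Sigma_\infty$. As $v$ lies in the relative interior of $\sigma$, the minimal cone of $\bigvee_{\bar\gamma}\Sigma_{\bar\gamma}$ containing $v$ is $\bigcap_{\bar\gamma}\theta_{\bar\gamma}(v)$, where $\theta_{\bar\gamma}(v)$ denotes the minimal cone of $\Sigma_{\bar\gamma}$ containing $v$; and $\theta_{\bar\gamma}(v)\subseteq n^\perp$ whenever two corners of $\bar\gamma$ (elements minimal for the $\sigma^\vee$-order among the representatives lying in $\sigma^\vee$) tie for the minimum defining $\psi_{\bar\gamma}(v)$ and differ by $n\in M$. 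It therefore suffices to produce, for $l\gg0$, cosets $\bar\gamma_2,\dots,\bar\gamma_d$ each carrying such a tying pair of corners, with the corresponding differences being $e_2^\ast,\dots,e_d^\ast$: then $\bigcap_j\theta_{\bar\gamma_j}(v)\subseteq\big(\bigcap_j(e_j^\ast)^\perp\big)\cap\sigma=\R v\cap\sigma=\R_{\geq0}v$, forcing $\R_{\geq0}v\in\Sigma_\infty$. To locate these tying pairs I would start from the interior functional $m_\tau$ supplied by moderateness and descend, inside the smooth chart $\tau$, to corners $c_j,\ c_j+e_j^\ast\in\sigma^\vee$ of a suitable $(1/l)$-coset that are the ones achieving the minimum of $\psi_{\bar\gamma_j}$ along $\R_{\geq0}v$: the smoothness of $\tau$ controls the combinatorics of the descent, while $m_\tau\in\operatorname{int}(\sigma^\vee)$ keeps the relevant corners away from the facets of $\sigma$, so that the competition defining $\psi_{\bar\gamma_j}$ near $v$ is decided inside $\tau$ and the wall $(e_j^\ast)^\perp$ is actually cut out.

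The main obstacle is precisely this last step. One must (i) justify the fan description of the normalized toric F-blowup by unwinding the universal flattening of the torus-equivariant module $(F_{(l)})_\ast\mathcal{O}$; (ii) verify that the cosets built from $m_\tau$ and the dual basis of $\tau$ really possess corners with the prescribed pairwise differences that realize the minimum of $\psi_{\bar\gamma_j}$ along $\R_{\geq0}v$---this is the heart of the matter and the one place the moderateness hypothesis is genuinely used, since without $m_\tau\in\operatorname{int}(\sigma^\vee)$ the minimizing corners could migrate toward walls of $\sigma$ and the wall through $v$ could be destroyed upon passing to the common refinement; and (iii) check that the configuration persists for all $l\gg0$, so that $\R_{\geq0}v$ survives in $\Sigma_\infty$. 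Granting (i)--(iii), the previous paragraphs combine to yield that every BGS essential divisor appears on $\widetilde{\FB}_{(\infty)}(X)$ as a prime divisor.
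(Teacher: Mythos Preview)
This theorem is not proved in the present paper at all: it is quoted verbatim as Corollary~6.9 of \cite{CMDY24} and used only as motivation in the introduction. There is therefore no ``paper's own proof'' to compare your attempt against; the actual argument lives in \cite{CMDY24}.

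On the substance of your sketch: the reductions in your first two paragraphs are sound. The fan description of $\widetilde{\FB}_{(l)}(X)$ as the common refinement of the normal fans of the Newton polyhedra of the fractional monomial ideals is indeed the standard one from \cite{Yas12}, the identification of BGS essential divisors with Hilbert basis elements is exactly \cite[Theorem~1.10]{BGS95}, and your observation that a Hilbert basis element $v$ must be a ray generator of the maximal smooth cone $\tau\in\Sigma$ containing it is correct (and is essentially the content of Proposition~\ref{moderate is Hilbert}). Your translation of the moderateness hypothesis into the existence of an interior lattice functional $m_\tau\in\operatorname{int}(\sigma^\vee)\cap M$ with $\langle m_\tau,e_i\rangle=1$ is also right and is the key structural input.

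Where your proposal remains genuinely incomplete is precisely where you say it is: step~(ii). You assert that from $m_\tau$ and the dual basis one can manufacture cosets $\bar\gamma_j$ with tying corners differing by $e_j^\ast$, and that these corners actually realize the minimum of $\psi_{\bar\gamma_j}$ at $v$, but you do not carry this out. This is not a formality: one has to control the competition among \emph{all} corners of the chosen coset, not just the two you have in mind, and the interior condition on $m_\tau$ must be converted into a quantitative separation statement that survives for all sufficiently large $l$. Until that construction is written down and verified, the argument is a plan rather than a proof. If you want to complete it, you should consult the proof in \cite{CMDY24} directly, where the required corner analysis is performed.
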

In this paper, we study the properties of moderate toric resolutions (Definition \ref{moderate toric resolution intro}). In particular, we ask when a moderate toric resolution exists, and if so, what properties it has and how it can be characterized. It is already known that a toric crepant resolution of arbitrary dimension and a minimal resolution of toric surfaces are moderate toric resolutions \cite{CMDY24}.

The following two theorems are our main results:
\begin{thm}[Theorem \ref{d dim no moderate}]\label{d dim no moderate intro}
Let $d\geq3$ and let $\sigma\subseteq N_\R$ be a $d$-dimensional cone generated by
\begin{equation*}
(1,0,\ldots,0),(0,1,0,\ldots,0),\ldots,(0,\ldots,0,1,0),(a_1,a_2,\ldots,a_d)
\end{equation*}
where $a_1,a_2\ldots,a_d\in\Z$, $a_1\leq a_2\leq\cdots\leq a_{d-1}<a_d$ and $1\leq a_1\leq d-2$. Then the affine toric variety associated to $\sigma$ does not admit a moderate toric resolution.
\end{thm}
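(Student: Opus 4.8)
The plan is to argue by contradiction: suppose $\Sigma$ is a moderate toric resolution of $\sigma$, and extract from the moderate condition a rigid description of $\Sigma$ near the facet $F:=\Cone(e_1,\dots,e_{d-1})$ of $\sigma$ (the facet not containing $v$). The basic mechanism is this. For a maximal, i.e.\ $d$-dimensional, cone $\tau\in\Sigma$, smoothness makes its primitive ray generators a $\Z$-basis of $N$, so the linear functional $m_\tau\in M$ taking the value $1$ on each of them is well defined and integral, and the affine hyperplane spanned by the minimal generators of $\tau$ is exactly $\{x:\langle m_\tau,x\rangle=1\}$. Hence, for a ray generator $w$ of $\sigma$, the moderate condition ``this hyperplane meets the $1$-face $\R_{\geq 0}w$'' is equivalent to $\langle m_\tau,w\rangle>0$, and so to $\langle m_\tau,w\rangle\ge1$. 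I will use this with $w\in\{e_1,\dots,e_{d-1}\}$ in the first step and with $w=v$ in the last step.

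First I would show that $\Sigma$ leaves $F$ unsubdivided, so that $F$ is itself a cone of $\Sigma$. Suppose $\rho=\R_{\geq 0}w\in\Sigma$ is a ray contained in $F$, with $w$ primitive; then $w=\sum_{i=1}^{d-1}w_ie_i$ with $w_i\in\Z_{\ge0}$ and $\gcd(w_1,\dots,w_{d-1})=1$, and if $w\ne e_i$ for every $i$ then $\sum_i w_i\ge2$. Choosing a maximal cone $\tau\in\Sigma$ containing $\rho$, the vector $w$ is one of the generators of $\tau$, so $1=\langle m_\tau,w\rangle=\sum_i w_i\langle m_\tau,e_i\rangle\ge\sum_i w_i\ge2$, a contradiction. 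Therefore the only rays of $\Sigma$ lying in $F$ are $\R_{\geq 0}e_1,\dots,\R_{\geq 0}e_{d-1}$; since $F$ is already smooth, the subdivision it inherits from $\Sigma$ is trivial, i.e.\ $F\in\Sigma$. I expect this boundary-rigidity step to be the real obstacle: \emph{a priori} a resolution is free to modify $F$, and ruling this out is what makes the rest of the argument possible.

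Finally I would analyze the unique maximal cone $\tau_0\in\Sigma$ having $F$ as a facet (it is unique because $F\subseteq\partial\sigma$). Write $\tau_0=\Cone(e_1,\dots,e_{d-1},u)$ with $\{e_1,\dots,e_{d-1},u\}$ a $\Z$-basis of $N$; then $u=(u_1,\dots,u_{d-1},\varepsilon)$ with $\varepsilon=\pm1$, and since $\tau_0\subseteq\sigma\subseteq\{x_d\ge0\}$ we get $\varepsilon=1$. Writing $u=\sum_{i=1}^{d-1}c_ie_i+cv$ with $c_i,c\ge0$, the $d$-th coordinate gives $c=1/a_d$, whence $u_i=c_i+a_i/a_d\ge a_i/a_d>0$, so $u_i\ge1$ for every $i\le d-1$ (this uses $1\le a_i<a_d$). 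Consequently $m_{\tau_0}=(1,\dots,1,\,1-\sum_{i=1}^{d-1}u_i)$, so
\[
\langle m_{\tau_0},v\rangle=\sum_{i=1}^{d-1}a_i+\Bigl(1-\sum_{i=1}^{d-1}u_i\Bigr)a_d\ \le\ a_d+\sum_{i=1}^{d-1}(a_i-a_d)\ =\ \sum_{i=1}^{d-1}a_i-(d-2)a_d .
\]
The hypotheses $a_1\le d-2$ and $a_i\le a_{d-1}\le a_d-1$ (for $i\le d-1$) give $\sum_{i=1}^{d-1}a_i\le(d-2)+(d-2)(a_d-1)=(d-2)a_d$, hence $\langle m_{\tau_0},v\rangle\le0$. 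This contradicts $\langle m_{\tau_0},v\rangle\ge1$, forced by the moderate condition on the $1$-face $\R_{\geq 0}v$ of $\sigma$; so $\sigma$ admits no moderate toric resolution. Apart from the rigidity step, the only points needing care are the sign determinations fixing $\varepsilon=1$ and $u_i\ge1$, and the single displayed estimate, which is exactly where the hypothesis $a_1\le d-2$ enters.
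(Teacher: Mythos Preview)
Your proof is correct and follows the same overall strategy as the paper: show that the facet $F=\Cone(e_1,\dots,e_{d-1})$ is itself a cone of $\Sigma$, identify the unique maximal cone $\tau_0\in\Sigma$ having $F$ as a facet, and check that the affine hyperplane through $\Gen(\tau_0)$ misses the ray $\R_{\geq0}v$, contradicting moderateness.

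The only notable difference is in how the ``rigidity'' of $F$ and the shape of the extra generator are obtained. The paper invokes Proposition~\ref{moderate is Hilbert} (moderate $\Rightarrow$ Hilbert basis resolution): since $\Hilb_N(\sigma)\subseteq\{e_1,\dots,e_d\}\cup(P_\sigma\setminus\{0\})$ and none of the $p_l$ lie in $F$, no new rays can appear in $F$; then the extra generator of $\tau_0$ must be some $p_l$, and the determinant forces $l=1$, so it is exactly $p_1=(1,\dots,1)$. You instead argue both points directly from the moderate condition, using that $m_\tau\in M$ is integral: the inequality $\langle m_\tau,e_i\rangle\geq1$ rules out any new ray $w$ in $F$ via $1=\langle m_\tau,w\rangle\geq\sum w_i\geq2$, and the containment $u\in\sigma$ together with $u_d=1$ gives $u_i\geq1$. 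Your route is more self-contained (it does not rely on Proposition~\ref{moderate is Hilbert}) and shows that only the inequality $u_i\geq1$, not the exact value $u=(1,\dots,1)$, is needed for the final estimate; the paper's route buys the sharper identification $u=p_1$ but at the cost of quoting the Hilbert basis result. Either way, both arguments land on the same inequality $\sum_{i=1}^{d-1}a_i-(d-2)a_d\leq0$, using $a_1\leq d-2$ and $a_i\leq a_d-1$ for $2\leq i\leq d-1$.
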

\begin{thm}[Theorem \ref{moderate is minQterminal}]\label{moderate is minQterminal intro}
Let $X$ be a three-dimensional affine toric variety. If $X$ admits a moderate toric resolution, then it is a minimal terminal $\Q$-factorial model of $X$.
\end{thm}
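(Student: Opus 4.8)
The plan is to reduce the statement to a numerical positivity of the canonical divisor and then read off that positivity from the moderate condition. First, observe that in dimension three a moderate toric resolution $X_\Sigma\to X_\sigma$ is automatically smooth: smoothness of every three-dimensional cone of $\Sigma$ propagates to all of its faces, and every cone of $\Sigma$ is a face of a three-dimensional one since $|\Sigma|=\sigma$ is three-dimensional. Hence $X_\Sigma$ is in particular $\Q$-factorial and terminal, so the only point that needs proof is that $X_\Sigma\to X_\sigma$ is \emph{minimal}, i.e.\ that $K_{X_\Sigma}$ is nef over $X_\sigma$. As $X_\Sigma$ is simplicial, this holds if and only if $K_{X_\Sigma}\cdot C_w\ge 0$ for every torus-invariant curve $C_w$ contracted by $X_\Sigma\to X_\sigma$, that is, for every $C_w$ coming from a two-dimensional wall $w$ of $\Sigma$ interior to $\sigma$.

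Next I would analyze a single such wall. Let $\tau_1=\Cone(u_1,u_2,u_3)$ and $\tau_2=\Cone(u_2,u_3,u_4)$ be the two maximal cones of $\Sigma$ meeting along $w=\Cone(u_2,u_3)$, with $u_1,\dots,u_4$ the primitive generators. Smoothness of $\tau_1$ and $\tau_2$ makes both $u_1,u_2,u_3$ and $u_2,u_3,u_4$ into $\Z$-bases of $N$; writing $u_4$ in the first basis and using that $u_1$ and $u_4$ lie on opposite sides of $w$ yields the wall relation $u_1+u_4=au_2+bu_3$ with $a,b\in\Z$. The standard toric intersection formula then gives $K_{X_\Sigma}\cdot C_w=a+b-2$, so everything reduces to the inequality $a+b\ge 2$.

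The one real idea is to pair the wall relation with the functional that cuts out the hyperplane of $\tau_1$. Let $m_{\tau_1}\in M$ be determined by $m_{\tau_1}(u_1)=m_{\tau_1}(u_2)=m_{\tau_1}(u_3)=1$, so that $\{m_{\tau_1}=1\}$ is exactly the affine hyperplane spanned by the generators of $\tau_1$. Evaluating $m_{\tau_1}$ on $u_1+u_4=au_2+bu_3$ gives $m_{\tau_1}(u_4)=a+b-1$. Now $u_4$ is a primitive generator of a ray of $\Sigma$, hence a nonzero lattice point of $\sigma$; and the moderate condition says precisely that $\{m_{\tau_1}=1\}$ meets every one-dimensional face of $\sigma$, which happens exactly when $m_{\tau_1}$ is positive on each ray of $\sigma$, and therefore on all of $\sigma\setminus\{0\}$ since $\sigma$ is the cone over its rays. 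Thus $m_{\tau_1}(u_4)>0$, and as $m_{\tau_1}(u_4)$ is an integer we get $a+b-1\ge 1$, i.e.\ $a+b\ge 2$. This yields $K_{X_\Sigma}\cdot C_w\ge 0$ for every interior wall, so $K_{X_\Sigma}$ is nef over $X_\sigma$ and $X_\Sigma\to X_\sigma$ is a minimal terminal $\Q$-factorial model of $X$.

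I do not expect a serious obstacle here: the wall relation, the intersection formula, the reduction to invariant curves, and the equivalence of the moderate condition with positivity of $m_{\tau_1}$ on $\sigma$ are all routine, and the crux is simply the substitution of the wall relation into $m_{\tau_1}$. The only delicate point is matching this conclusion with the precise meaning of \emph{minimal terminal $\Q$-factorial model} adopted in the paper; in dimension three one checks, using the structure of $K$-trivial toric wall crossings (and hence the classification of $\Q$-factorial terminal toric threefolds), that $X_\Sigma$ admits no further contraction to a terminal $\Q$-factorial model, which is the step I would carry out last.
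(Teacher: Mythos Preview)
Your argument is correct and takes a genuinely different route from the paper's. The paper proceeds indirectly: it invokes Proposition~\ref{moderate is Hilbert} and Lemma~\ref{Hilbert dominate minQterminal} to produce a minimal terminal $\Q$-factorial model $\Sigma_{\mathrm t}$ dominated by the moderate resolution $\Sigma$, and then appeals to Corollary~\ref{3Qterminal no moderate} (itself a consequence of Theorem~\ref{d dim no moderate}) to rule out any singular three-dimensional cone in $\Sigma_{\mathrm t}$, forcing $\Sigma=\Sigma_{\mathrm t}$. You bypass both external inputs: from the wall relation you read off $K_{X_\Sigma}\cdot C_w=m_{\tau_1}(u_4)-1$, and the moderate condition, rephrased as $m_{\tau_1}>0$ on $\sigma\setminus\{0\}$, yields $m_{\tau_1}(u_4)\ge 1$ and hence relative nefness of $K_{X_\Sigma}$. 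This is more elementary, avoids the terminal lemma and the BGS domination lemma entirely, and in fact works verbatim in every dimension (with wall relation $u_1+u_{d+1}=\sum_{i=2}^{d}a_iu_i$ one gets $K_{X_\Sigma}\cdot C_w=m_{\tau_1}(u_{d+1})-1\ge 0$), whereas the paper's route is tied to dimension three through Lemma~\ref{terminal lemma} and Lemma~\ref{Hilbert dominate minQterminal}. What the paper's approach buys is the logical link back to Theorem~\ref{d dim no moderate}, which is the organizing non-existence result of the section. Your closing worry is unnecessary: once $X_\Sigma$ is smooth with $K_{X_\Sigma}$ nef over $X_\sigma$ it is by definition a minimal model in the MMP sense, and this coincides with the combinatorial description in Remark~\ref{canonical terminal}; no separate analysis of $K$-trivial wall crossings is required.
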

The following corollaries follow from these theorems.
\begin{cor}[Corollary \ref{3Qterminal no moderate}, Corollary \ref{4GorQterminal no moderate}]\label{3Qterminal no moderate intro}
Let $X$ be a three-dimensional terminal $\Q$-factorial singular affine toric variety or four-dimensional Gorenstein terminal $\Q$-factorial singular affine toric variety. Then $X$ does not admit a moderate toric resolution.
\end{cor}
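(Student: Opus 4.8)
The plan is to derive both statements from the two main theorems. For the three‑dimensional case, suppose for contradiction that a three‑dimensional terminal $\Q$‑factorial singular affine toric variety $X$ admits a moderate toric resolution $\pi\colon X_\Sigma\to X$. By Theorem \ref{moderate is minQterminal}, $X_\Sigma$ is a minimal terminal $\Q$‑factorial model of $X$. Since $X$ is itself terminal and $\Q$‑factorial, the identity map exhibits $X$ as a terminal $\Q$‑factorial model of $X$ with no exceptional divisor, so any minimal terminal $\Q$‑factorial model of $X$ is isomorphic to $X$ in codimension one; in particular $\pi$ contracts no divisor. As a small proper birational morphism onto a $\Q$‑factorial variety is an isomorphism, $\pi$ must be an isomorphism, whence $X\cong X_\Sigma$ is smooth --- contradicting the singularity of $X$.

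For the four‑dimensional Gorenstein case, the plan is to show that, up to a lattice isomorphism, the defining cone $\sigma$ of such an $X$ belongs to the family of Theorem \ref{d dim no moderate} with $d=4$, and then to invoke that theorem. Write $X=X_\sigma$ with $\sigma=\Cone(v_1,v_2,v_3,v_4)$, which is simplicial by $\Q$‑factoriality and full‑dimensional. The Gorenstein hypothesis provides $m\in M$ with $\langle m,v_i\rangle=1$ for all $i$; choosing coordinates so that $m$ is the last coordinate functional, we get $v_i=(w_i,1)$ with $w_i\in\Z^3$, and $P\coloneqq\Conv\{w_1,w_2,w_3,w_4\}$ is a lattice $3$‑simplex. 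A standard discrepancy computation shows that $X$ is terminal if and only if $P$ contains no lattice point other than its vertices, i.e.\ $P$ is an empty lattice tetrahedron. By White's classification of empty lattice tetrahedra, such a $P$ is, up to an affine lattice automorphism, either unimodular --- which yields a smooth cone, excluded by the singularity assumption --- or of the form $\Conv\{(0,0,0),(1,0,0),(0,0,1),(p,q,1)\}$ with $\gcd(p,q)=1$; computing the multiplicity of the associated cone gives $q$, so $q\ge 2$ and hence $1\le p<q$.

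Lifting this affine equivalence to a $\GL_4(\Z)$‑equivalence of cones, $\sigma\cong\Cone(v_1,v_2,v_3,v_4)$ with $v_1=(0,0,0,1)$, $v_2=(1,0,0,1)$, $v_3=(0,0,1,1)$, $v_4=(p,q,1,1)$. Since $(v_1,v_2,v_3,e_2)$ is a $\Z$‑basis of $\Z^4$, in that basis $v_4=(-p,p,1,q)$, so $\sigma\cong\Cone(e_1,e_2,e_3,(-p,p,1,q))$; applying the unimodular map fixing $e_1,e_2,e_3$ and sending $e_4$ to $e_1+e_4$ turns the last generator into $(q-p,p,1,q)$, and permuting the first three coordinates into non‑decreasing order produces $\Cone(e_1,e_2,e_3,(a_1,a_2,a_3,q))$ with $\{a_1,a_2,a_3\}=\{1,p,q-p\}$, so $1=a_1\le a_2\le a_3\le q-1<q$ and $1\le a_1\le 2=d-2$. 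Hence $\sigma$ satisfies the hypotheses of Theorem \ref{d dim no moderate} with $d=4$, and since $\GL_4(\Z)$‑equivalent cones define isomorphic toric varieties, $X$ admits no moderate toric resolution. The bulk of the work, and the most delicate part, is this four‑dimensional reduction: correctly invoking White's theorem, verifying that terminality of a Gorenstein $\Q$‑factorial affine toric variety is equivalent to emptiness of the associated lattice simplex, and performing the explicit unimodular changes of basis so that the resulting cone meets the precise numerical constraints of Theorem \ref{d dim no moderate} --- in particular, arranging that the smallest coordinate of the last generator equals $1$, so that the condition $a_1\le d-2$ holds. In the three‑dimensional case the only subtle point is the claim that a minimal terminal $\Q$‑factorial model of an already terminal $\Q$‑factorial variety is that variety itself.
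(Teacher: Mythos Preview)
Your three-dimensional argument is circular within the paper's logical structure. You invoke Theorem~\ref{moderate is minQterminal intro}, but the proof of that theorem (Theorem~\ref{moderate is minQterminal}) \emph{relies on} Corollary~\ref{3Qterminal no moderate}: one restricts a putative moderate resolution to a singular terminal simplicial cone $\sigma_0$ of the dominated minimal model and derives a contradiction precisely from the nonexistence of a moderate resolution of $\sigma_0$. The introduction says this explicitly. Thus you cannot use Theorem~\ref{moderate is minQterminal} to establish the three-dimensional half of the corollary. The paper's own argument is short and logically prior: the terminal lemma (Lemma~\ref{terminal lemma}) puts $\sigma$ in the form $\Cone((1,0,0),(0,1,0),(1,p,q))$ with $1\le p<q$, $\gcd(p,q)=1$, which lies in the family of Theorem~\ref{d dim no moderate} with $d=3$ and $a_1=1\le d-2$.

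Your four-dimensional argument is correct and takes a genuinely different route from the paper. The paper quotes the Morrison--Stevens classification (Lemma~\ref{MS84}) together with the standard dictionary for abelian quotient singularities to write the cone directly as $\Cone(e_1,e_2,e_3,(1,a,r-a,r))$, already in the normal form of Theorem~\ref{d dim no moderate}. You instead translate the Gorenstein terminal $\Q$-factorial condition into emptiness of the height-one lattice $3$-simplex, invoke White's theorem, and then carry out explicit unimodular base changes to reach a fourth generator $(a_1,a_2,a_3,q)$ with $\{a_1,a_2,a_3\}=\{1,p,q-p\}$, so that $a_1=1\le 2=d-2$ and $a_3<q$. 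Both routes land in Theorem~\ref{d dim no moderate}; the paper's is a one-line citation, while yours makes the underlying combinatorics of empty tetrahedra explicit and avoids the group-theoretic input of \cite{MS84}.
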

\begin{cor}[Corollary \ref{moderate is crepant}]\label{moderate is crepant intro}
Let $X$ be a three-dimensional canonical affine toric variety. If $X$ admits a moderate toric resolution, then it is a crepant resolution of $X$.
\end{cor}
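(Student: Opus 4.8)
The plan is to deduce this from Theorem~\ref{moderate is minQterminal intro} together with the negativity lemma. Write $f\colon Y=X_\Sigma\to X=X_\sigma$ for the given moderate toric resolution. By definition, a moderate toric resolution is in particular a resolution, so $Y$ is smooth and $f$ is a proper birational morphism. Since $X$ is canonical it is $\Q$-Gorenstein, so $f^*K_X$ is defined and we may write $K_Y=f^*K_X+\sum_i a_i E_i$, where the $E_i$ are the $f$-exceptional prime divisors and $a_i$ their discrepancies. Canonicity of $X$ says exactly that $a_i\ge 0$ for every $i$, so $E\coloneqq\sum_i a_i E_i$ is an effective, $f$-exceptional $\Q$-divisor (and $\Q$-Cartier, since $Y$ is smooth), and the statement ``$f$ is a crepant resolution'' is equivalent to $E=0$.

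First I would invoke Theorem~\ref{moderate is minQterminal intro}: $Y$ is a minimal terminal $\Q$-factorial model of $X$, so $K_Y$ is nef over $X$. Because $f^*K_X$ is numerically trivial on every curve contracted by $f$ (projection formula), we get $E\cdot C=(K_Y-f^*K_X)\cdot C=K_Y\cdot C\ge 0$ for all such curves $C$, i.e.\ $E$ is $f$-nef. Then I would apply the negativity lemma to the proper birational morphism $f$ and the $\Q$-Cartier divisor $-E$: the divisor $-(-E)=E$ is $f$-nef, and $f_*(-E)=-f_*E=0$ is effective since $E$ is $f$-exceptional, so the lemma gives that $-E$ is effective. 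Hence $E\le 0$, and combined with $E\ge 0$ this forces $E=0$, i.e.\ $K_Y=f^*K_X$. As $Y$ is smooth, $f$ is a crepant resolution of $X$.

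Essentially the same argument can be phrased combinatorially: the piecewise linear function attached to $E$ on $\sigma$ is conical, vanishes on the rays of $\sigma$ by definition of the support function of $f^*K_X$, is $\le 0$ at every primitive generator of $\Sigma$ by canonicity, and is concave on $\sigma$ because $K_Y$ is nef over $X$ and $f^*K_X$ is linear (signs depending on the chosen convention); a conical concave function that vanishes on the extremal rays of $\sigma$ is nonnegative on all of $\sigma$, which forces this function to vanish identically. Either way, the only real work here is bookkeeping — using ``minimal model'' in the sense that $K_Y$ is relatively nef, which is precisely what is available, and applying the negativity lemma in the correct direction; the hypothesis $\dim X=3$ enters solely through Theorem~\ref{moderate is minQterminal intro}, which carries all the genuine difficulty.
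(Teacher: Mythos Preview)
Your argument is correct. Both proofs rest on Theorem~\ref{moderate is minQterminal intro}, but they finish differently. The paper stays entirely on the combinatorial side: it invokes the description in Remark~\ref{canonical terminal} that the rays of any minimal terminal $\Q$-factorial model lie on the bounded faces of $\Conv(\sigma\cap N\setminus\{0\})$, and observes that for a canonical cone these bounded faces all lie on the single affine hyperplane $H$ through $\Gen(\sigma)$, so every new ray has discrepancy zero. Your route instead reads ``minimal terminal $\Q$-factorial model'' as ``$K_Y$ is $f$-nef'' and closes with the negativity lemma; this is the standard MMP argument that a relatively nef, effective, exceptional divisor must vanish, and it would work verbatim in any dimension once an analogue of Theorem~\ref{moderate is minQterminal intro} is available. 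The only point worth making explicit is that the paper's notion of minimal terminal $\Q$-factorial model (via Remark~\ref{canonical terminal}) does coincide with the MMP notion, so that $K_Y$ really is $f$-nef --- this is standard for toric varieties, but it is the hinge on which your argument turns. Your second, piecewise-linear paraphrase is essentially the toric translation of the negativity lemma and is close in spirit to the paper's reasoning, though the paper gets there by citing the structure of $\Sigma_{\mathrm{c}}$ rather than by a convexity argument.
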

The proofs of the above theorems and corollaries are outlined as follows. Theorem \ref{d dim no moderate intro} is proved by focusing on the nondegenerate cone that always appears in a moderate toric resolution, and leads to a contradiction. Moreover, using the terminal lemma and results from \cite{MS84}, we see that Corollary \ref{3Qterminal no moderate intro} is a special case of Theorem \ref{d dim no moderate intro}. Theorem \ref{moderate is minQterminal intro} is reduced to the case of terminal $\Q$-factorial affine toric varieties and is shown by using Corollary \ref{3Qterminal no moderate intro}. Colollary \ref{moderate is crepant intro} is a special case of Theorem \ref{moderate is minQterminal intro}. In the last section, as an analogue of Corollary \ref{3Qterminal no moderate intro}, we give an example of a four-dimensional Gorenstein terminal $\Q$-factiorial affine toric variety which does not admit a Hilbert basis resolution. It is a generalization of \cite[Example 3.1]{BGS95}.

\section*{Acknowledgements}

We would like to thank our supervisor Takehiko Yasuda, who gave constructive comments to improve this paper. We would also like to thank Daniel Duarte, Yusuke Sato and Kohei Sato for valuable comments and suggestions. The second named author was supported by JST SPRING, Grant Number JPMJSP2138.

\section{Preliminalies}

Let $M$ and $N$ be free abelian groups of rank $d$ which are dual to each other, and let $N_\R\coloneqq N\otimes\R$ and $M_\R\coloneqq M\otimes\R$, and let $k$ be an arbitrary field. By a \emph{cone}, we mean a strongly convex, rational polyhedral cone. A \emph{primitive element} of $N$ means a element $n\in N$ such that $\varepsilon n\notin N$ for each $0<\varepsilon<1$. The basic terminology of toric variety conforms to \cite{CLS11}. In particular, we use the following notation for cones and convex hulls:
\begin{align*}
&\Cone(u_1,\ldots,u_s)\coloneqq\left\{\sum_{i=1}^s\lambda_iu_i\mid \lambda_i\geq0\right\}, \\
&\Conv(S)\coloneqq\left\{\sum_{u\in S}\lambda_uu\mid\lambda_u\geq0,\sum_{u\in S}\lambda_u=1\ \text{and}\ \lambda_u=0\ \text{for all but finitely many}\ u\right\},
\end{align*}
where $u_1,\ldots,u_s\in N$ are primitive elements and $S\subseteq N_\R$ is a set.

Let $\sigma\subseteq N_\R$ be a cone and let $\sigma^\vee\subseteq M_\R$ be the dual cone of $\sigma$. The affine toric variety associated to $\sigma$ is given by
\begin{equation*}
X_\sigma\coloneqq\Spec k[\sigma^\vee\cap M].
\end{equation*}
More generally, for a fan $\Sigma$ of $N_\R$, we write the toric variety associated to $\Sigma$ as $X_\Sigma$.

\subsection{BGS essential divisors and Hilbert basis resolutions}

A \emph{torus invariant divisor over} a normal toric variety $X$ means a prime divisor $E$ on $Z$ for some toric proper birational morphism $Z\to X$ of normal toric varieties. We identify two torus invariant divisors over $X$, $E\subset Z$ and $E'\subset Z'$ if $E$ and $E'$ map to each other by the natural birational map between $Z$ and $Z'$.

Let $\sigma\subseteq N_\R$ be a cone, as is well-known, there is a one-to-one correspondence between the set of primitive elements of $N$ contained in $\sigma$ and the set of torus invariant divisors over $X_\sigma$ \cite[pp.53--54]{Ful93}. By this correspondence, one-dimensional faces of $\sigma$ correspond to torus invariant divisors \emph{on} $X_\sigma$.

There are several versions of essential divisors. In this paper, we mainly consider the following version of essential divisors.
\begin{dfn}[BGS essential divisor \cite{BGS95}]
We say that a torus invariant divisor over a normal affine toric variety $X$ is a \emph{BGS essential divisor} if it appears on every toric resolution of $X$ as a prime divisor.
\end{dfn}
\begin{dfn}
For a cone $\sigma\subseteq N_\R$, the \emph{Hilbert basis} of $\sigma$ is
\begin{equation*}
\Hilb_N(\sigma)\coloneqq\{n\in\sigma\cap N\setminus\{0\}\mid\forall n_1,n_2\in\sigma\cap N,n=n_1+n_2\Rightarrow n_1=0\ \text{or}\ n_2=0\}.
\end{equation*}
\end{dfn}
As is well-known, the Hilbert basis is also characterised as the minimal generating set of the monoid $\sigma\cap N$.

Let $\Sigma$ be a fan of $N_\R$ and let $\Sigma(r)$ be the set of the $r$-dimensional cone of $\Sigma$ for each $r$. For $\rho\in\Sigma(1)$, there exists a unique primitive element $P(\rho)\in\rho\cap N$ with $\rho=\R_{\geq0}\cdot P(\rho)$. Therefore, We have the set of minimal generators of $\sigma\in\Sigma$
\begin{equation*}
\Gen(\sigma)\coloneqq\{P(\rho)\mid\rho\in\Sigma(1),\rho\preceq\sigma\},
\end{equation*}
where the symbol $\rho\preceq\sigma$ means that $\rho$ is a face of $\sigma$. We define $\Gen(\Sigma)$ by
\begin{equation*}
\Gen(\Sigma)\coloneqq\bigcup_{\sigma\in\Sigma}\Gen(\sigma).
\end{equation*}
\begin{prop}[{\cite[Theorem 1.10]{BGS95}}]
Let $\sigma\subseteq N_\R$ be a cone. The primitive element $n$ of $\sigma\cap N$ is contained in $\Hilb_N(\sigma)$ if and only if $n$ is contained in $\Gen(\Sigma)$ for every toric resolution $\Sigma$ of $\sigma$. If this is the case, the torus invariant divisor correspond to $n$ is a BGS essential divisor.
\end{prop}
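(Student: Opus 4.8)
This statement combines a genuine equivalence with a remark that, granting the equivalence, is immediate from the definitions: for primitive $n\in\sigma\cap N$ and a toric resolution $\Sigma$ of $\sigma$, one has $n\in\Gen(\Sigma)$ exactly when $\R_{\geq0}n\in\Sigma(1)$, i.e.\ exactly when the torus invariant divisor corresponding to $n$ appears on $X_\Sigma$ as a prime divisor; so ``$n\in\Gen(\Sigma)$ for every toric resolution $\Sigma$'' is precisely the defining property of a BGS essential divisor. It therefore suffices to prove the equivalence. For the implication $n\in\Hilb_N(\sigma)\Rightarrow n\in\Gen(\Sigma)$ for every $\Sigma$, I would take any toric resolution $\Sigma$ and a cone $\tau\in\Sigma$ with $n\in\tau$; since $\tau$ is smooth, $\Gen(\tau)$ extends to a $\Z$-basis of $N$, so $n=\sum_{v\in\Gen(\tau)}a_v v$ with all $a_v\in\Z_{\geq0}$. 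Each such $v$ lies in $\sigma\cap N\setminus\{0\}$, so if $\sum_v a_v\geq2$ we could split $n$ as a sum of two nonzero elements of $\sigma\cap N$, contradicting $n\in\Hilb_N(\sigma)$; hence $\sum_v a_v=1$ and $n\in\Gen(\tau)\subseteq\Gen(\Sigma)$.

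For the converse I would prove the contrapositive, by induction on $\dim\sigma$: if $n$ is primitive and $n\notin\Hilb_N(\sigma)$, then $\sigma$ has a toric resolution without $\R_{\geq0}n$ as a ray. Fix $n=n_1+n_2$ with $n_1,n_2\in\sigma\cap N\setminus\{0\}$ and let $n_i'$ be the primitive generator of $\R_{\geq0}n_i$; then $n_1',n_2'$ are linearly independent (if they were parallel, $n$ would fail to be primitive). Ray generators of $\sigma$ always lie in $\Hilb_N(\sigma)$, so $\R_{\geq0}n$ is not a ray of $\sigma$; thus $n$ lies in the relative interior of a face $\sigma'$ of dimension $\geq2$, and testing against a supporting functional of $\sigma'$ shows $n_1,n_2\in\sigma'$, hence $n\notin\Hilb_N(\sigma')$. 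If $\dim\sigma\leq2$ (so $\sigma'=\sigma$), the minimal resolution of $\sigma$ is a resolution whose set of ray generators is $\Hilb_N(\sigma)$, which omits $n$. If $\sigma'\subsetneq\sigma$, the inductive hypothesis gives a resolution of $\sigma'$ without $\R_{\geq0}n$. Otherwise $\sigma'=\sigma$ and $d\coloneqq\dim\sigma\geq3$: pick a rational hyperplane $H\ni0$ containing $\mathrm{span}(n_1',n_2')$ (possible since $d-1\geq2$); then $\{\sigma\cap H^+,\sigma\cap H^-\}$ together with faces refines $\sigma$ and has $\gamma_0\coloneqq\sigma\cap H$ as a cone, with $n$ in the relative interior of $\gamma_0$ (as $n$ is interior to $\sigma$) and $n_1,n_2\in\gamma_0$, so $n\notin\Hilb_N(\gamma_0)$; since $\dim\gamma_0=d-1$, the inductive hypothesis gives a resolution $\Sigma_{\gamma_0}$ of $\gamma_0$ without $\R_{\geq0}n$. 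In each of the last two cases I then extend the resolution of the lower-dimensional cone to a subdivision of $\sigma$ containing it as a subfan — a routine extension of a subdivision of a boundary face (e.g.\ cone from an interior lattice point after extending over the rest of the boundary) — and resolve the remaining non-smooth cones. Let $\gamma$ be the cone of the fixed resolution with $n$ in its relative interior; it is smooth and of dimension $\geq2$, so it is never subdivided, and every further subdivision step introduces new rays only in relative interiors of non-smooth cones other than $\gamma$. Since $\R_{\geq0}n$ lies in the relative interior of the persisting smooth cone $\gamma$, it never becomes a ray, so the final smooth fan is the desired resolution.

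The substantial direction is the converse, and the delicate point is the ``confinement'' used there: one must pre-refine $\sigma$ so that $n$ lands in the relative interior of a cone in which it is still decomposable — equivalently, a cone that occurs unsubdivided in some resolution of itself — before resolving, so that no subsequent or inductively produced subdivision is forced through $\R_{\geq0}n$; the decomposition $n=n_1+n_2$ together with the hyperplane cut is exactly what secures this. I expect making the ``extend the subdivision of the boundary face, then resolve away from $\gamma$'' step fully rigorous — in particular verifying that no auxiliary subdivision silently reintroduces $\R_{\geq0}n$ — to be the main bookkeeping obstacle.
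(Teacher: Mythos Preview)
The paper does not supply its own proof of this proposition: it is quoted verbatim as \cite[Theorem 1.10]{BGS95} and used as a black box. So there is no in-paper argument to compare against; the relevant benchmark is the original Bouvier--Gonzalez-Sprinberg proof.

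On its merits, your forward implication is the standard one and is complete. Your contrapositive for the converse is a sound strategy and the reduction steps are correct: the linear-independence of $n_1',n_2'$, the supporting-functional argument pushing $n_1,n_2$ into the face $\sigma'$, the hyperplane cut in the full-dimensional case, and the $2$-dimensional base case via the minimal resolution are all fine (for the hyperplane cut, note that a smooth cone in $H$ with respect to the saturated sublattice $N\cap H$ remains smooth in $N$, which you use implicitly). The one point you flag---extending the lower-dimensional resolution to a smooth refinement of $\sigma$ without subdividing the smooth cone $\gamma$ containing $n$---is genuinely the crux, but it can be made rigorous: after coning over an interior lattice point to extend the boundary subdivision, first triangulate without adding rays (smooth cones, being simplicial, are untouched), and then iterate star subdivisions at a lattice point in the relative interior of a \emph{minimal} non-smooth cone. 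For a simplicial $\tau$ that is non-smooth while all proper faces are smooth, any nonzero lattice point of $P_\tau$ lies in the relative interior of $\tau$; star subdivision there only refines cones containing $\tau$, all of which are non-smooth. Hence no smooth cone---in particular $\gamma$---is ever subdivided, and $\R_{\ge0}n$ never becomes a ray. With that lemma stated and proved, your argument is complete.
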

\begin{dfn}[\cite{SS23}]
We say a subdivision $\Sigma$ of a cone $\sigma\subseteq N_\R$ is a \emph{Hilbert basis resolution}\footnote{In \cite{BGS95} and \cite{Dai02}, it is called $G$-desingularization or Hilb-desingularization, but here we will follow \cite{SS23} and call it Hilbert basis resolution.} if $\Sigma$ is smooth and $\Gen(\Sigma)=\Hilb_N(\sigma)$. We also call the induced morphism of schemes $X_\Sigma\to X_\sigma$ a Hilbert basis resolution.
\end{dfn}

\subsection{Moderate toric resolutions}

\begin{dfn}\label{linear function}
Let $\sigma=\Cone(u_1,\ldots,u_d)\subseteq N_\R$ be a nondegenerate simplicial cone. We define $l_\sigma$ as the linear function $N_\R\to\R$ that sends each $u_i$ to 1.
\end{dfn}
Let $\Sigma$ be a resolution of a nondegenerate cone $\sigma$. Then, each nondegenerate cone $\tau\in\Sigma$ is smooth, in particular, simplicial. Therefore, we can consider the linear function $l_\tau$ defined in Definition \ref{linear function}.
\begin{dfn}[\cite{CMDY24}]\label{moderate toric resolution}
Let $\Sigma$ be a subdivision of a nondegenerate cone $\sigma\subseteq N_\R$. We say that $\Sigma$ is a \emph{moderate toric resolution} of $\sigma$ if every nondegenerate cone $\tau$ of $\Sigma$ is smooth and the affine hyperplane $\{x\in N_\R\mid l_\tau(x)=1\}$ spanned by $\Gen(\tau)$ intersects with every one-dimensional face of $\sigma$. We also call the induced morphism of schemes $X_\Sigma\to X_\sigma$ a moderate toric resolution.
\end{dfn} 
\begin{exam}
A toric crepant resolution of arbitrary dimension and a minimal resolution of toric surfaces are moderate toric resolutions \cite{CMDY24}.
\end{exam}
\begin{prop}[{\cite[Proposition 6.7]{CMDY24}}]\label{moderate is Hilbert}
Let $\Sigma$ be a moderate toric resolution of a nondegenerate cone $\sigma\subseteq N_\R$. Then $\Gen(\Sigma)=\Hilb_N(\sigma)$, that is, a moderate toric resolution is a Hilbert basis resolution.
\end{prop}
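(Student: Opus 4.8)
The plan is to establish the two inclusions $\Hilb_N(\sigma)\subseteq\Gen(\Sigma)$ and $\Gen(\Sigma)\subseteq\Hilb_N(\sigma)$ separately. For the first, I would begin by noting that a moderate toric resolution is in particular an ordinary toric resolution: in a subdivision of the nondegenerate cone $\sigma$ every cone is a face of some nondegenerate cone of $\Sigma$, the latter is smooth by hypothesis, and faces of smooth cones are smooth, so $X_\Sigma$ is smooth; and since $\Sigma$ subdivides $\sigma$, the induced morphism $X_\Sigma\to X_\sigma$ is proper and birational. Granting this, the inclusion $\Hilb_N(\sigma)\subseteq\Gen(\Sigma)$ is immediate from \cite[Theorem 1.10]{BGS95} (the proposition quoted above), which in fact says more: any $n\in\Hilb_N(\sigma)$ lies in $\Gen(\Sigma')$ for \emph{every} toric resolution $\Sigma'$ of $\sigma$.

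For the reverse inclusion $\Gen(\Sigma)\subseteq\Hilb_N(\sigma)$, fix $v\in\Gen(\Sigma)$, say $v=P(\rho)$ for some ray $\rho\in\Sigma(1)$, and choose a nondegenerate cone $\tau\in\Sigma$ having $\rho$ as a face. Since $\tau$ is smooth, $\Gen(\tau)$ is a $\Z$-basis of $N$, so the linear function $l_\tau$ of Definition \ref{linear function} is integral, i.e.\ $l_\tau\in M$, and $l_\tau(v)=1$ because $v\in\Gen(\tau)$. The crucial step is to upgrade the moderate condition to the assertion that $l_\tau$ is strictly positive on $\sigma\setminus\{0\}$. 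Indeed, for each one-dimensional face $\rho'$ of $\sigma$ with primitive generator $w$, Definition \ref{moderate toric resolution} provides a point $x\in\rho'$ with $l_\tau(x)=1$; writing $x=tw$ with $t\ge0$ forces $t>0$, hence $l_\tau(w)=1/t>0$. As $\sigma$ is strongly convex and generated by these $w$, it follows that $l_\tau(y)>0$ for every $y\in\sigma\setminus\{0\}$, and since $l_\tau\in M$ this gives $l_\tau(n)\in\Z_{>0}$, hence $l_\tau(n)\ge1$, for every $n\in\sigma\cap N\setminus\{0\}$.

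Now suppose, toward a contradiction, that $v=n_1+n_2$ with $n_1,n_2\in\sigma\cap N\setminus\{0\}$. Applying $l_\tau$ and using the previous step gives $1=l_\tau(v)=l_\tau(n_1)+l_\tau(n_2)\ge1+1=2$, a contradiction. Hence $v$ is indecomposable in $\sigma\cap N$, i.e.\ $v\in\Hilb_N(\sigma)$. Combining the two inclusions yields $\Gen(\Sigma)=\Hilb_N(\sigma)$, so $\Sigma$ is a Hilbert basis resolution.

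I expect the main point of substance to be the middle step: recognizing that smoothness of $\tau$ places $l_\tau$ in $M$, and that the moderate intersection condition is exactly what is needed to make $l_\tau$ positive on all of $\sigma$ away from the origin — it is this positivity, combined with integrality, that promotes the lattice-point values of $l_\tau$ to be at least $1$ and produces the contradiction. The remaining ingredients — that a moderate resolution is genuinely a resolution, and that in a subdivision of a full-dimensional cone every cone is a face of a nondegenerate one — are standard facts about fans that I would only invoke in passing.
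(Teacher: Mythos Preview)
Your argument is correct. Note, however, that the present paper does not itself supply a proof of this proposition: it is quoted from \cite[Proposition 6.7]{CMDY24} and stated without argument, so there is no in-paper proof to compare your proposal against. The route you take---using smoothness of $\tau$ to place $l_\tau$ in $M$, the moderate intersection condition to make $l_\tau$ strictly positive on $\sigma\setminus\{0\}$, and the resulting bound $l_\tau(n)\ge 1$ for every $n\in\sigma\cap N\setminus\{0\}$ to force indecomposability of each $v\in\Gen(\Sigma)$---is the natural one and goes through cleanly; the first inclusion via \cite[Theorem 1.10]{BGS95} is likewise the standard move.
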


\subsection{Toric singularities}

\begin{dfn}\label{multiplicity}
Let $\sigma=\Cone(u_1,\ldots,u_s)\subseteq N_\R$ be a simplicial cone. Then the \emph{multiplicity} (\emph{index}) of $\sigma$ is the index
\begin{equation*}
\mult(\sigma)\coloneqq[\sigma\cap N+(-\sigma)\cap N:\Z u_1+\cdots+\Z u_s].
\end{equation*}
\end{dfn}
\begin{prop}[{\cite[Proposition 11.1.8]{CLS11}}]\label{property of multiplicity}
Let $\sigma=\Cone(u_1,\ldots,u_s)\subseteq N_\R$ be a simplicial cone. Then
\begin{enumerate}
\item $\sigma$ is smooth if and only if $\mult(\sigma)=1$.
\item $\mult(\sigma)$ is the number of points in $P_\sigma$, where
\begin{equation*}
P_\sigma=\left\{n\in N\mid n=\sum_{i=1}^s\lambda_iu_i,0\leq\lambda_i<1\right\}.
\end{equation*}
\item Let $e_1,\ldots,e_s$ be a basis of $\sigma\cap N+(-\sigma)\cap N$ and write $u_i=\sum_{j=1}^sa_{ij}e_j$. Then
\begin{equation*}
\mult(\sigma)=|\det(a_{ij})|.
\end{equation*}
\end{enumerate}
\end{prop}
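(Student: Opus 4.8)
The plan is to reduce everything to elementary lattice theory on the sublattice $N_\sigma \coloneqq \sigma\cap N+(-\sigma)\cap N$ of $N$, on which $u_1,\dots,u_s$ forms an $\R$-basis. First I would record the identification $N_\sigma=W\cap N$, where $W\coloneqq\mathrm{Span}_\R(u_1,\dots,u_s)$: the inclusion ``$\subseteq$'' is immediate, and for ``$\supseteq$'', given $n\in W\cap N$ one writes $n=\sum_i\lambda_iu_i$ with $\lambda_i\in\R$, chooses an integer $m$ large enough that $m+\lambda_i\geq0$ for all $i$, and observes that $n=(n+m\sum_iu_i)-m\sum_iu_i$ exhibits $n$ as a difference of two elements of $\sigma\cap N$. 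In particular $N_\sigma$ is a saturated sublattice of $N$, hence free of rank $s$ and a direct summand of $N$, and the index appearing in the definition of $\mult(\sigma)$ is finite because $u_1,\dots,u_s$ are $\R$-linearly independent.

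Next I would prove (iii). Fix a $\Z$-basis $e_1,\dots,e_s$ of $N_\sigma$ and write $u_i=\sum_ja_{ij}e_j$; then $A=(a_{ij})$ is the matrix whose rows express $u_1,\dots,u_s$ in the basis $e_1,\dots,e_s$, so $\Z u_1+\cdots+\Z u_s$ is the row span of $A$ inside $N_\sigma$, and $\det A\neq0$ since the $u_i$ are linearly independent. By the Smith normal form, $[N_\sigma:\Z u_1+\cdots+\Z u_s]$ equals the product of the elementary divisors of $A$, which is $|\det A|$; this gives $\mult(\sigma)=|\det(a_{ij})|$. Part (i) then follows: $\sigma$ is smooth, i.e.\ $u_1,\dots,u_s$ extends to a $\Z$-basis of $N$, precisely when $u_1,\dots,u_s$ is already a $\Z$-basis of $N_\sigma$ (using that $N_\sigma$ is a direct summand of $N$), i.e.\ when $\Z u_1+\cdots+\Z u_s=N_\sigma$, i.e.\ when $\mult(\sigma)=1$ (equivalently $A\in\GL_s(\Z)$).

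For (ii) I would show that the quotient map $N_\sigma\to N_\sigma/(\Z u_1+\cdots+\Z u_s)$ restricts to a bijection on $P_\sigma$. For surjectivity, given $n\in N_\sigma$, write $n=\sum_i\mu_iu_i$ with $\mu_i\in\Q$; then $n-\sum_i\lfloor\mu_i\rfloor u_i=\sum_i(\mu_i-\lfloor\mu_i\rfloor)u_i$ lies in $P_\sigma$ and is congruent to $n$. For injectivity, if $\sum_i\lambda_iu_i\equiv\sum_i\lambda_i'u_i$ with all $\lambda_i,\lambda_i'\in[0,1)$, then $\sum_i(\lambda_i-\lambda_i')u_i\in\Z u_1+\cdots+\Z u_s$, so $\lambda_i-\lambda_i'\in\Z\cap(-1,1)=\{0\}$ by linear independence, and the two points coincide. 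Hence $|P_\sigma|=[N_\sigma:\Z u_1+\cdots+\Z u_s]=\mult(\sigma)$.

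There is no serious obstacle here: the only step that is not pure formalism is the identification $N_\sigma=W\cap N$ together with its saturatedness, which is exactly what makes the smoothness criterion in (i) come out cleanly; the remainder is standard bookkeeping with the half-open parallelepiped $P_\sigma$ and the Smith normal form of $A$.
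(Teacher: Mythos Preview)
The paper does not supply its own proof of this proposition; it is quoted verbatim from \cite[Proposition 11.1.8]{CLS11} as background and used without argument. Your proof is correct and self-contained: the identification $N_\sigma=W\cap N$ (hence saturatedness and direct-summand status of $N_\sigma$), the Smith normal form computation for (iii), the deduction of (i), and the coset-representative argument for (ii) are all valid and in fact constitute essentially the standard proof one finds in the cited reference.
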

\begin{rem}\label{canonical terminal}
The affine toric variety $X_\sigma$ has a canonical model $X_{\Sigma_\text{c}}$, associated to the subdivision $\Sigma_\text{c}$ of $\sigma$ whose cones are generated by the faces of $\Conv(\sigma\cap N\setminus\{0\})$. Moreover, the minimal terminal $\Q$-factorial models of $X_\sigma$ correspond to the subdivisions of $\Sigma_\text{c}$ into simplicial terminal cones whose minimal generators belong to the bounded faces of $\Conv(\sigma\cap N\setminus\{0\})$\cite[pp.552--554]{CLS11}.
\end{rem}

\section{Main results}

In this chapter, we prove the non-existence of moderate toric resolutions for three-dimensional terminal $\Q$-factorial affine toric variety and four-dimensional Gorenstein terminal $\Q$-factorial affine toric variety both of them are special cases of Theorem \ref{d dim no moderate} (Corollary \ref{3Qterminal no moderate}, Corollary \ref{4GorQterminal no moderate}). We also discuss the properties of moderate toric resolutions in dimension three (Theorem \ref{moderate is minQterminal}, Corollary \ref{moderate is crepant}). Furthermore, we give an example of the four-dimensional Gorenstein terminal $\Q$-factorial affine toric varieties which does not admit a Hilbert basis resolution (Theorem \ref{4 no Hilbert basis resolution}), which is a generalization of \cite[Example 3.1]{BGS95}.

\subsection{Properties of moderate toric resolutions for certain normal affine toric varieties in dimensions three and four}

First, the class of toric singularities appears in this section is characterized as follows.
\begin{lem}[Terminal lemma {\cite[pp.34--36]{Oda88}}]\label{terminal lemma}
Let the rank of $N$ be three and let $\sigma\subseteq N_\R$ be a three-dimensional terminal simplicial cone. Then there exists a $\Z$-basis $u_1,u_2,u_3$ of $N$ such that $\sigma=\Cone(u_1,u_2,u_1+pu_2+qu_3)\ (0\leq p<q,\gcd(p,q)=1)$.
\end{lem}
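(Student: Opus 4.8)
The plan is to rephrase the statement in terms of the finite abelian group $G:=N/N'$, where $v_1,v_2,v_3$ are primitive generators of $\sigma$, $N':=\Z v_1+\Z v_2+\Z v_3$, and $r:=\mult(\sigma)=|G|$. If $r=1$ then $\{v_1,v_2,v_3\}$ is a $\Z$-basis of $N$ and one takes $(u_1,u_2,u_3)=(v_1,v_2,v_3-v_1)$, $(p,q)=(0,1)$, so assume $r\ge 2$. By Proposition \ref{property of multiplicity}(ii), $P_\sigma$ is a full set of coset representatives of $N/N'$; write $n(g)=\lambda_1(g)v_1+\lambda_2(g)v_2+\lambda_3(g)v_3\in P_\sigma$ for the representative of $g\in G$ (so $\lambda_i(g)\in[0,1)$) and put $s(g):=l_\sigma(n(g))=\sum_i\lambda_i(g)$. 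Since every lattice point of $\sigma$ has the form $n(g)+\sum_i k_iv_i$ with $k_i\in\Z_{\ge0}$, and then has $l_\sigma$-value $\ge s(g)$, terminality of $\sigma$ is equivalent to $s(g)>1$ for all $g\ne 0$. As $\lambda_i(g)+\lambda_i(-g)$ is $1$ when $\lambda_i(g)\ne 0$ and $0$ otherwise, one has $s(g)+s(-g)=\#\{i:\lambda_i(g)\ne 0\}$, so $s(g),s(-g)>1$ forces $\lambda_i(g)\ne 0$ for every $i$ and $1<s(g)<2$, for all $g\ne 0$.

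Next I would prove $G$ is cyclic. If not, $G$ contains a subgroup isomorphic to $(\Z/\ell)^2$ for some prime $\ell$, and since $\ell\,n(h)\in N'$ for such $h$, the map $h\mapsto(\ell\lambda_1(h),\ell\lambda_2(h),\ell\lambda_3(h))$ is a homomorphism $(\Z/\ell)^2\to(\Z/\ell)^3$, injective by the previous paragraph, whose image is a two-dimensional $(\Z/\ell)$-subspace. But any two-dimensional subspace of $(\Z/\ell)^3$ meets each coordinate hyperplane in a line, hence contains a nonzero vector with a zero entry, contradicting that the images of nonzero elements have all entries nonzero. So $G=\langle g_0\rangle$ is cyclic of order $r$; writing $a_i:=r\lambda_i(g_0)\in\{1,\dots,r-1\}$, applying the same non-vanishing to $\tfrac{r}{\gcd(a_i,r)}g_0$ shows each $a_i$ is a unit modulo $r$.

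The crucial—and, I expect, hardest—step is to arrange, after permuting $v_1,v_2,v_3$ and replacing $g_0$ by a unit multiple, that the weight vector of $g_0$ is exactly $\tfrac1r(-1,-p,1)$ with $\gcd(p,r)=1$ and $1\le p\le r-1$. Its content is that two of the three weights must be opposite modulo $r$: normalizing the type to $\tfrac1r(1,a_2,a_3)$ (with $a_2,a_3$ units), terminality reads $k+(ka_2\bmod r)+(ka_3\bmod r)>r$ for $k=1,\dots,r-1$, and one must show this forces one of $1+a_2$, $1+a_3$, $a_2+a_3$ to vanish modulo $r$, after which the stated normal form follows by a further permutation and rescaling. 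This implication is White's theorem on empty lattice tetrahedra, equivalently the classification of three-dimensional terminal cyclic quotient singularities of Oda, Frumkin and Morrison--Stevens; its proof is not formal, proceeding either by an induction on $r$ exploiting the continued-fraction structure of the pair $(a_2,a_3)$, or by showing that an empty three-dimensional lattice tetrahedron has lattice width one. In the write-up this is the point at which one cites \cite{Oda88}.

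Finally, with $g_0$ normalized as above, let $z:=n(g_0)-v_1-v_2\in N$, where $n(g_0)=\tfrac{r-1}{r}v_1+\tfrac{r-p}{r}v_2+\tfrac1r v_3$. Then $rz=-v_1-pv_2+v_3$, i.e.\ $v_3=v_1+pv_2+rz$, so $N=\Z v_1+\Z v_2+\Z z$; since the $v_3$-coordinate of $z$ is $\tfrac1r\ne0$, the triple $\{v_1,v_2,z\}$ is a $\Z$-basis of $N$. Taking $u_1:=v_1$, $u_2:=v_2$, $u_3:=z$ and $q:=r$ then gives $\sigma=\Cone(u_1,u_2,u_1+pu_2+qu_3)$ with $0\le p<q$ and $\gcd(p,q)=1$, which is the claim.
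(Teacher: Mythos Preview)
The paper does not supply a proof of this lemma; it is quoted with a citation to \cite[pp.~34--36]{Oda88}. Your sketch is correct and is precisely the standard argument found there: the Reid--Tai inequality $s(g)>1$ together with $s(g)+s(-g)\le 3$ forces all weights to be nonzero, which in turn forces $G$ to be cyclic with unit weights; the remaining step---that two of the three weights are opposite modulo $r$---is exactly White's theorem on empty lattice tetrahedra (equivalently the Danilov--Frumkin--Morrison--Stevens classification), after which the explicit $\Z$-basis drops out as you wrote. So your proposal matches the referenced proof in substance, and your identification of White's theorem as the one genuinely nontrivial ingredient is on the mark.
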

\begin{lem}[\cite{MS84},\cite{BBBK11},{\cite[Proposition 11.4.19 (a)]{CLS11}}]\label{MS84}
Let $G\subseteq\GL(4,\C)$ be an abelian group of order $r$ without pseudo-reflections. Then $\A_\C^4/G$ is a Gorenstein terminal singularity if and only if $G$ is conjugate to a group generated by a diagonal matrix $\diag(\zeta,\zeta^{-1},\zeta^a,\zeta^{-a})$, where $\gcd(a,r)=1$ and $\zeta$ is an $r$-th primitive root of unity.
\end{lem}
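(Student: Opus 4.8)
The plan is to pass to the combinatorics of the weights of $G$ and then appeal to the terminal lemma of Morrison--Stevens. Since $G$ is abelian it is simultaneously diagonalizable, so after conjugation we may assume $G\subseteq(\C^\ast)^4$ is a group of diagonal matrices; as every element has order dividing $r=|G|$, its entries are $r$-th roots of unity, and writing $\zeta=e^{2\pi i/r}$ we obtain an embedding $G\hookrightarrow(\Z/r)^4$, $g\mapsto(a_1(g),\dots,a_4(g))$, where $g=\diag(\zeta^{a_1(g)},\dots,\zeta^{a_4(g)})$. I would then recall the two standard criteria: by Watanabe's theorem (equivalently, the toric Gorenstein criterion), $\A_\C^4/G$ is Gorenstein if and only if $G\subseteq\SL(4,\C)$, i.e.\ $a_1(g)+a_2(g)+a_3(g)+a_4(g)\equiv 0\pmod r$ for all $g$; and by the Reid--Tai criterion, it is terminal if and only if $\sum_{j=1}^{4}\{t\,a_j(g)/r\}>1$ for every $g\neq 1$ and every $t$, equivalently $\overline{a_1(g)}+\dots+\overline{a_4(g)}>r$ for all $g\neq 1$, where $\overline{m}\in\{0,\dots,r-1\}$.

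Next I would show that, under all three hypotheses, every $g\neq 1$ acts with no eigenvalue equal to $1$. Since $G$ has no pseudo-reflections, such a $g$ has at least two eigenvalues different from $1$; if it had exactly $k\in\{2,3\}$ of them, then, $G$ being in $\SL(4,\C)$, the ages of $g$ and $g^{-1}$ would be positive integers summing to $k\le 3$, forcing one of them to equal $1$, contradicting terminality. Hence each coordinate character $G\to\C^\ast$ is injective, and since $|G|=r$ it is an isomorphism onto the $r$-th roots of unity; in particular $G$ is cyclic of order $r$, and a generator has all four weights coprime to $r$. Replacing the generator by a suitable power, we may normalize $G=\langle\diag(\zeta,\zeta^{a_2},\zeta^{a_3},\zeta^{a_4})\rangle$ with $\gcd(a_i,r)=1$ and $1+a_2+a_3+a_4\equiv 0\pmod r$, and the Reid--Tai inequality holds for all $t\in\{1,\dots,r-1\}$. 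Now the terminal lemma of Morrison--Stevens applies: these number-theoretic conditions force the multiset $\{1,a_2,a_3,a_4\}$ modulo $r$ to split as $\{b,-b\}\sqcup\{c,-c\}$; since $1$ lies in it, one pair is $\{1,-1\}$, and conjugating by the permutation matrix that orders the weights accordingly gives $G=\langle\diag(\zeta,\zeta^{-1},\zeta^{a},\zeta^{-a})\rangle$ with $\gcd(a,r)=1$.

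For the converse I would simply check that $G=\langle\diag(\zeta,\zeta^{-1},\zeta^{a},\zeta^{-a})\rangle$ with $\gcd(a,r)=1$ has order $r$, lies in $\SL(4,\C)$, has no pseudo-reflections (a nontrivial power $g^t$ has all eigenvalues $\zeta^{\pm t},\zeta^{\pm at}$ different from $1$ because $t,at\not\equiv 0\pmod r$), and satisfies $\mathrm{age}(g^t)=\{t/r\}+\{-t/r\}+\{at/r\}+\{-at/r\}=2>1$; so it is Gorenstein terminal by the two criteria. The only genuinely hard ingredient is the Morrison--Stevens terminal lemma invoked in the second paragraph --- everything else is bookkeeping with standard criteria --- and I would cite \cite{MS84} for it rather than reprove it.
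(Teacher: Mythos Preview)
The paper does not give its own proof of this lemma; it is stated as a cited result from \cite{MS84}, \cite{BBBK11}, and \cite{CLS11}. Your proposal is a correct and essentially complete proof sketch, following precisely the classical route of Morrison--Stevens: reduce via Watanabe and Reid--Tai to the numerical problem, use the age-sum argument to force every nontrivial element to have all eigenvalues $\neq 1$ (hence $G$ cyclic with all weights prime to $r$), and then invoke the Morrison--Stevens terminal lemma to obtain the $\{b,-b\}\sqcup\{c,-c\}$ pairing. One small refinement worth making explicit: once all weights are units mod $r$, the Gorenstein condition forces $\sum_j\overline{ta_j}\in\{r,2r,3r\}$, and the substitution $t\mapsto -t$ shows the sum is exactly $2r$ for every $t$, which is the hypothesis under which the Morrison--Stevens lemma is usually stated. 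With that remark, every step is justified and the converse check is clean; there is nothing to compare against in the paper itself.
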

\begin{thm}\label{d dim no moderate}
Let $d\geq3$ and let $\sigma\subseteq N_\R$ be a $d$-dimensional cone generated by
\begin{equation*}
e_1=(1,0,\ldots,0),e_2=(0,1,0,\ldots,0),\ldots,e_{d-1}=(0,\ldots,0,1,0),e_d=(a_1,a_2,\ldots,a_d)
\end{equation*}
where $a_1,a_2,\ldots,a_d\in\Z,a_1\leq a_2\leq\cdots\leq a_{d-1}<a_d$ and $1\leq a_1\leq d-2$. Then $X_\sigma$ does not admit a moderate toric resolution.
\end{thm}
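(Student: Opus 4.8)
The plan is to exploit the very rigid combinatorial constraint in Definition \ref{moderate toric resolution}: if $\Sigma$ is a moderate toric resolution of $\sigma$, then for every nondegenerate (hence $d$-dimensional, smooth) cone $\tau\in\Sigma$, the affine hyperplane $H_\tau\coloneqq\{l_\tau=1\}$ must meet \emph{all} $d$ of the rays of $\sigma$. In particular, consider any such $\tau$ whose relative interior lies in the relative interior of $\sigma$; there must be at least one, since the nondegenerate cones of $\Sigma$ cover a dense open subset of $\sigma$. For such a $\tau$, write $l_\tau = \sum_{i=1}^d c_i x_i$ in the coordinates of the excerpt. The condition $l_\tau(e_i)=1$ for $i=1,\dots,d-1$ forces $c_1=\cdots=c_{d-1}=1$, and then the condition that $H_\tau$ meets the ray through $e_d=(a_1,\dots,a_d)$ — i.e. that $l_\tau(e_d)>0$ — becomes $a_1+\cdots+a_{d-1}+c_d a_d>0$. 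Since moreover $H_\tau$ must meet the ray $\R_{\ge 0}e_d$ at a lattice point or at least the hyperplane is spanned by lattice generators of $\tau$, one gets that $l_\tau$ takes value $1$ on the primitive generators of $\tau$, all of which lie in $\sigma\cap N$; combined with $l_\tau = x_1+\cdots+x_{d-1}+c_d x_d$ this pins down $c_d$ from the requirement that $l_\tau$ be an integral (or at least $\tfrac1{\text{something}}$-integral) functional that equals $1$ on a $\Z$-basis of $N$.

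The key step is then to show that the forced value of $l_\tau$ on $\sigma\cap N$ is incompatible with smoothness of $\tau$, i.e. with $\mult(\tau)=1$ via Proposition \ref{property of multiplicity}. Concretely, I would argue as follows. By Proposition \ref{moderate is Hilbert} a moderate toric resolution is a Hilbert basis resolution, so $\Gen(\Sigma)=\Hilb_N(\sigma)$; hence every generator of every cone of $\Sigma$ lies in $\Hilb_N(\sigma)$, and in particular the $d$ generators $v_1,\dots,v_d$ of our interior cone $\tau$ are Hilbert basis elements, each with $l_\tau(v_j)=1$, so they all lie on the single affine hyperplane $H_\tau$. The primitive generator of the ray $\R_{\ge0}e_d$, namely $e_d$ itself (it is primitive since, e.g., $a_1=1$ after reordering or by the coprimality that one can arrange), is required to have $l_\tau(e_d)$ be a positive value; but because $\tau$ is smooth with all generators on $H_\tau$, the only lattice points of $\sigma$ on the line $\R_{\ge 0}e_d$ inside the "slab" $0<l_\tau\le 1$ form too coarse a set — and the hypothesis $1\le a_1\le d-2$, together with $a_1\le\cdots\le a_{d-1}<a_d$, is exactly what makes $l_\tau(e_d)=\tfrac{a_1+\cdots+a_{d-1}}{?}+\cdots$ land strictly between $0$ and $1$ in a way that contradicts $e_d$ (or the relevant Hilbert basis element on that ray) being at height $1$. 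I would extract the contradiction by computing, for the unique smooth $\tau$ containing a given generic ray, the determinant $|\det(v_1,\dots,v_d)|$ and showing the height-$1$ constraint forces it to exceed $1$ when $1\le a_1\le d-2$.

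More carefully, the cleanest route is probably: take the interior cone $\tau$ whose closure contains the ray through $e_d$ is impossible (that ray is a face of $\sigma$, so some cone of $\Sigma$ with that ray as a face is \emph{not} nondegenerate unless $d\ge 2$... ) — so instead fix attention on a nondegenerate $\tau$ having the ray $\R_{\ge 0}e_d$ among its faces; such a $\tau$ exists because $\R_{\ge0}e_d$ is a ray of $\sigma$ and $\Sigma$ subdivides $\sigma$. Then $e_d\in\Gen(\tau)$, so $l_\tau(e_d)=1$; and since $c_1=\cdots=c_{d-1}=1$ would require $\tau$ to also have the rays $e_1,\dots,e_{d-1}$ as faces, which it need not, I instead write $l_\tau=\sum c_i x_i$ with $l_\tau(e_d)=1$, i.e. $\sum_{i<d}c_i a_i + c_d a_d = 1$, and use that $H_\tau$ meets $\R_{\ge0}e_i$ for all $i<d$, i.e. $c_i>0$ for all $i<d$. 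Each $c_i$ is of the form $(\text{integer})/\mult(\tau)$ when $\tau$ is simplicial; smoothness gives $\mult(\tau)=1$, so each $c_i\in\Z$, hence $c_i\ge 1$ for $i<d$, and then $1 = \sum_{i<d}c_i a_i + c_d a_d \ge \sum_{i<d}a_i + c_d a_d \ge a_1 + (d-2)a_1 + c_d a_d = (d-1)a_1 + c_d a_d \ge (d-1) + c_d a_d$ using $a_1\ge 1$ and $a_i\ge a_1$. Now $c_d a_d = l_\tau(e_d) - \sum_{i<d} c_i a_i$; one shows $c_d\ge 0$ is impossible to reconcile (it would give $1\ge (d-1)$, absurd for $d\ge 3$, once one also rules out $c_d<0$ via convexity of $\sigma$ and the fact that $l_\tau$ is positive on the interior of $\tau\subseteq\sigma$). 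The condition $a_1\le d-2$ is used to guarantee the relevant auxiliary lattice point of $\sigma$ sits in the right place; I will need to chase exactly where $a_1\le d-2$ (as opposed to $a_1\le d-1$) enters, and that identification of the borderline case is the main obstacle.

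The anticipated hard part is precisely this last bookkeeping: isolating which nondegenerate cone $\tau$ of an arbitrary moderate $\Sigma$ to test, and verifying that the inequality chain forced by $l_\tau\ge 1$ on a lattice point of $\sigma$ (combined with the integrality from $\mult(\tau)=1$) genuinely collapses under the hypothesis $1\le a_1\le d-2$, while being careful that nothing weaker than moderateness already gives the contradiction (it should not, since Hilbert basis resolutions of such $\sigma$ do exist — it is the affine-hyperplane / height-$1$ condition that fails). Once the correct $\tau$ and the correct auxiliary lattice point are identified, the contradiction is a short numerical argument as sketched; the work is in the setup, not the arithmetic.
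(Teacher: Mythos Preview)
Your proposal has a genuine gap: you never settle on \emph{which} nondegenerate cone $\tau\in\Sigma$ to test, and neither of your two candidates closes.

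In your first attempt you assert $l_\tau(e_i)=1$ for $i=1,\dots,d-1$, which would force $c_1=\cdots=c_{d-1}=1$; but this only holds if $e_1,\dots,e_{d-1}\in\Gen(\tau)$, and a ``cone whose relative interior lies in the interior of $\sigma$'' need not have any $e_i$ as a generator. In your second attempt you choose $\tau$ with $e_d\in\Gen(\tau)$ and correctly deduce $c_i\in\Z_{\ge 1}$ for $i<d$ (integrality from $\mult(\tau)=1$, positivity from moderateness), whence $c_d a_d\le 1-\sum_{i<d}a_i\le 2-d<0$. But $c_d<0$ is \emph{not} ruled out by ``convexity of $\sigma$'' or ``$l_\tau>0$ on the interior of $\tau$'': those conditions say nothing about the sign of a single coefficient of $l_\tau$, and indeed negative $c_d$ is perfectly compatible with smoothness and moderateness of $\tau$. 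With the remaining $d-1$ generators of $\tau$ unconstrained you cannot extract a contradiction, which is also why you were unable to locate where the hypothesis $a_1\le d-2$ enters.

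The missing idea is to look at the \emph{opposite} corner of $\sigma$: the facet $\Cone(e_1,\dots,e_{d-1})$ is smooth, hence is itself a cone of $\Sigma$, and there is a unique $d$-dimensional $\sigma_1\in\Sigma$ having it as a facet. Smoothness of $\sigma_1$ and $\Gen(\Sigma)=\Hilb_N(\sigma)$ force the remaining generator to be a point $p_l\in P_\sigma\setminus\{0\}$ with $\det(e_1,\dots,e_{d-1},p_l)=1$; enumerating $P_\sigma$ one finds this determinant equals $l$, so the extra generator is $p_1=(1,1,\dots,1)$. Now $l_{\sigma_1}$ is completely determined: $l_{\sigma_1}=x_1+\cdots+x_{d-1}-(d-2)x_d$, and
\[
l_{\sigma_1}(e_d)=\sum_{i=1}^{d-1}a_i-(d-2)a_d=(a_1-(d-2))+\sum_{i=2}^{d-1}(a_i-a_d)\le 0,
\]
using precisely $a_1\le d-2$ and $a_2\le\cdots\le a_{d-1}<a_d$. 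Thus $H_{\sigma_1}$ misses the ray $\R_{\ge 0}e_d$, contradicting moderateness. The whole argument is a short computation once the correct $\tau=\sigma_1$ is identified; your integrality observation is correct but applied to the wrong cone.
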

\begin{proof}
From Proposition \ref{property of multiplicity}, since $\#P_\sigma=\mult(\sigma)=a_d$, the set
\begin{equation*}
P_\sigma\setminus\{0\}=\{n\in N\mid n=\sum_{i=1}^d\lambda_ie_i,0\leq\lambda_i<1\}\setminus\{0\}
\end{equation*}
consists of the following $a_d-1$ points:
\begin{align*}
p_l&=\sum_{i=1}^{d-1}\left(\left\lceil\frac{a_il}{a_d}\right\rceil-\frac{a_il}{a_d}\right)e_i+\frac{l}{a_d}e_d \\
&=\left(\left\lceil\frac{a_1l}{a_d}\right\rceil,\left\lceil\frac{a_2l}{a_d}\right\rceil,\ldots,\left\lceil\frac{a_dl}{a_d}\right\rceil=l\right)\ (1\leq l\leq a_d-1).
\end{align*}
Here, for a real number $\lambda$, $\lceil\lambda\rceil$ denotes the smallest integer greater than or equal to $\lambda$. Suppose that $\sigma$ admits a moderate toric resolution $\Sigma$. Since $\Sigma$ is a Hilbert basis resolution (Proposition \ref{moderate is Hilbert}) and also since $\tau\coloneqq\Cone(e_1,e_2,\ldots,e_{d-1})$ is smooth and $p_l\notin\tau$, there exists a unique $d$-dimensional cone $\sigma_1$ of $\Sigma$ that contains $\tau$ as a face. This cone is of the form $\Cone(e_1,e_2,\ldots,e_{d-1},p_l),\det(e_1,e_2,\ldots,e_{d-1},p_l)=1$. Since
\begin{align*}
&\det(e_1,e_2,\ldots,e_d)=a_d\neq1, \\
&\det(e_1,e_2,\ldots,e_{d-1},p_l)=l
\end{align*}
for each $1\leq l\leq a_d-1$, respectively, we have $\sigma_1=\Cone(e_1,e_2,\ldots,e_{d-1},p_1)$. Since $p_1=(1,1,\ldots,1)$, the hyperplane spanned by $\Gen(\sigma_1)$ is
\begin{equation*}
H=\{(x_1,\ldots,x_d)\in\R^d\mid\sum_{i=1}^{d-1}x_i-(d-2)x_d=1\}.
\end{equation*}
On the other hand, since
\begin{equation*}
\sum_{i=1}^{d-1}a_i-(d-2)a_d\leq d-2+\sum_{i=2}^{d-1}(a_i-a_d)\leq0,
\end{equation*}
we have $H\cap\R_{\geq0}\cdot e_d=\emptyset$, which contradicts that the subdivision $\Sigma$ of $\sigma$ is a moderate toric resolution. Thus, $\sigma$ does not admit a moderate toric resolution.
\end{proof}
\begin{cor}\label{3Qterminal no moderate}
Let $X_\sigma$ be a three-dimensional terminal $\Q$-factorial singular affine toric variety. Then $X_\sigma$ does not admit a moderate toric resolution.
\end{cor}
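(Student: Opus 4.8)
The plan is to reduce the statement directly to Theorem \ref{d dim no moderate} with $d = 3$. Since $X_\sigma$ is $\Q$-factorial, the cone $\sigma$ is simplicial; as $X_\sigma$ is moreover three-dimensional and terminal, $\sigma$ is a three-dimensional terminal simplicial cone, so the terminal lemma (Lemma \ref{terminal lemma}) applies. It furnishes a $\Z$-basis $u_1, u_2, u_3$ of $N$ with
\begin{equation*}
\sigma = \Cone(u_1,\, u_2,\, u_1 + p u_2 + q u_3), \qquad 0 \le p < q, \quad \gcd(p,q) = 1 .
\end{equation*}

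Next I would eliminate the degenerate values of the parameters. If $q = 1$, then $\gcd(p,q) = 1$ forces $p = 0$, so $\sigma = \Cone(u_1, u_2, u_1 + u_3)$ is a smooth cone, contradicting the hypothesis that $X_\sigma$ is singular; hence $q \ge 2$, and then $\gcd(p,q) = 1$ rules out $p = 0$, so $1 \le p < q$. Identifying $N$ with $\Z^3$ by means of the basis $u_1, u_2, u_3$ — which changes nothing, since the existence of a moderate toric resolution depends only on the pair $(N, \sigma)$ up to isomorphism — the cone $\sigma$ is generated by $(1,0,0)$, $(0,1,0)$ and $(1, p, q)$. This is precisely the shape of the cone in Theorem \ref{d dim no moderate} for $d = 3$, with $a_1 = 1$, $a_2 = p$, $a_3 = q$: the inequalities $a_1 \le a_2 < a_3$ hold because $1 \le p < q$, and $1 \le a_1 \le d - 2 = 1$ holds trivially. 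Theorem \ref{d dim no moderate} then shows that $X_\sigma$ admits no moderate toric resolution.

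There is essentially no obstacle in this argument beyond the bookkeeping of the reduction. The one point deserving a moment's care is verifying that the degenerate case $p = 0$ of the terminal lemma is incompatible with $X_\sigma$ being singular, since that is exactly what secures the constraint $1 \le a_1$ required to invoke Theorem \ref{d dim no moderate}.
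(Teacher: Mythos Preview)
Your proposal is correct and follows essentially the same approach as the paper: invoke the terminal lemma to put $\sigma$ in the standard form $\Cone((1,0,0),(0,1,0),(1,p,q))$, then apply Theorem~\ref{d dim no moderate} with $d=3$. You spell out in more detail than the paper why singularity forces $1\le p<q$ (ruling out $q=1$ as smooth and $p=0$, $q\ge2$ via the $\gcd$ condition), but the argument is otherwise identical.
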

\begin{proof}
Since $\sigma$ is singular and from terminal lemma (Lemma \ref{terminal lemma}), we can write $\sigma=\Cone((1,0,0),(0,1,0),(1,p,q))\ (1\leq p\leq q,\gcd(p,q)=1)$. Hence, from Theorem \ref{d dim no moderate}, $X_\sigma$ does not admit a moderate toric resolution.
\end{proof}
\begin{cor}\label{4GorQterminal no moderate}
Let $X_\sigma$ be a four-dimensional Gorenstein terminal $\Q$-factorial singular affine toric variety. Then $X_\sigma$ does not admit a moderate toric resolution.
\end{cor}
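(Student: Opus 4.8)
The plan is to follow the proof of Corollary~\ref{3Qterminal no moderate} almost verbatim, using Lemma~\ref{MS84} in place of the terminal lemma to supply a normal form for $\sigma$, and then to quote Theorem~\ref{d dim no moderate} with $d=4$.

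First I would normalize $\sigma$. Since $X_\sigma$ is $\Q$-factorial, $\sigma$ is simplicial, say $\sigma=\Cone(u_1,u_2,u_3,u_4)$ with the $u_i$ primitive; putting $N'=\Z u_1+\cdots+\Z u_4$ presents $X_\sigma$ as $\A^4/G$ with $G=N/N'$ a finite abelian group acting diagonally on $\A^4$, and the primitivity of the $u_i$ forces $G$ to act without pseudo-reflections (a diagonal element fixing a coordinate hyperplane pointwise would yield $\varepsilon u_i\in N$ with $0<\varepsilon<1$ for some $i$). As $X_\sigma$ is singular, $G\neq\{1\}$; as $X_\sigma$ is Gorenstein terminal, Lemma~\ref{MS84} applies (this is a statement about the cone $\sigma$, so the characteristic of $k$ plays no role and one may argue over $\C$), and after a linear change of coordinates $G$ is generated by $\diag(\zeta,\zeta^{-1},\zeta^a,\zeta^{-a})$ with $\zeta$ a primitive $r$-th root of unity, $r=|G|\geq2$, and $\gcd(a,r)=1$. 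Translating back to the language of cones, $X_\sigma\cong X_{\sigma_0}$, where $\sigma_0=\Cone(e_1,e_2,e_3,e_4)$ is the first orthant of $\R^4$ taken with respect to $N_0=\Z^4+\Z\cdot\tfrac1r(1,-1,a,-a)$. Reducing $a$ modulo $r$ gives $1\leq a\leq r-1$, and since conjugating the generator of $G$ by the transposition of the third and fourth coordinates leaves $\A^4/G$ unchanged up to isomorphism while turning $a$ into $r-a$, I may moreover assume $a\leq r-a$, i.e. $1\leq a\leq r/2$.

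Next I would produce a $\Z$-basis of $N_0$ that puts $\sigma_0$ into the shape required by Theorem~\ref{d dim no moderate}. Take $f_1=e_1$, $f_2=e_3$, $f_3=e_4$, and $f_4=-\tfrac1r(1,-1,a,-a)-e_4$. A one-line determinant check (the transition matrix has absolute determinant $1/r$, which equals the covolume of $N_0$) shows that $(f_1,f_2,f_3,f_4)$ is a $\Z$-basis of $N_0$; and combining $(1,-1,a,-a)=-rf_3-rf_4$ with $(1,-1,a,-a)=f_1-e_2+af_2-af_3$ gives
\[
e_2=f_1+a\,f_2+(r-a)\,f_3+r\,f_4 .
\]
As $f_1,f_2,f_3$ are three of the four generators of $\sigma_0$, in the coordinates of the basis $(f_1,f_2,f_3,f_4)$ we obtain
\[
\sigma_0=\Cone\big((1,0,0,0),(0,1,0,0),(0,0,1,0),(1,a,r-a,r)\big).
\]

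Finally, this is exactly the cone of Theorem~\ref{d dim no moderate} with $d=4$ and $(a_1,a_2,a_3,a_4)=(1,a,r-a,r)$: these are integers, $a_1=1\leq a_2=a\leq a_3=r-a<a_4=r$ (the middle inequality is $a\leq r/2$, the last one $a\geq1$), and $1\leq a_1\leq d-2$ reads $1\leq1\leq2$. Hence Theorem~\ref{d dim no moderate} shows that $X_{\sigma_0}\cong X_\sigma$ has no moderate toric resolution. Apart from this, the argument is the same bookkeeping as in Corollary~\ref{3Qterminal no moderate}; the only points demanding care are the appeal to Lemma~\ref{MS84} (which requires checking that $G$ has no pseudo-reflections) and the two normalizations---ordering the three ``coordinate'' generators and arranging $a\leq r-a$---which are made precisely so that the whole chain $1\leq a_1\leq a_2\leq a_3<a_4$ together with $a_1\leq d-2$ of Theorem~\ref{d dim no moderate} holds. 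I expect the latter to be the only genuine, if minor, obstacle.
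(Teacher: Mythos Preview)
Your proposal is correct and follows essentially the same approach as the paper: invoke Lemma~\ref{MS84} to put $\sigma$ into the normal form $\Cone((1,0,0,0),(0,1,0,0),(0,0,1,0),(1,a,r-a,r))$ with $1\leq a\leq r-a<r$ and $\gcd(a,r)=1$, and then apply Theorem~\ref{d dim no moderate} with $d=4$. The only difference is that the paper outsources the passage from the quotient description $\A^4/G$ to the explicit cone to \cite[Section~5.1]{CLS11}, whereas you carry out the change of basis $(f_1,f_2,f_3,f_4)$ by hand; your computation checks out and your handling of the normalizations (no pseudo-reflections, $a\leq r-a$) is accurate.
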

\begin{proof}
Since $\sigma$ is singular and from Lemma \ref{MS84}, $\sigma$ is generated by $(1,0,0,0)$, $(0,1,0,0)$, $(0,0,1,0)$, $(1,a,r-a,r)$ $(1\leq a\leq r-a<r,\gcd(a,r)=1)$ \cite[Section 5.1]{CLS11}. Hence, from Theorem \ref{d dim no moderate}, $X_\sigma$ does not admit a moderate toric resolution. 
\end{proof}
\begin{dfn}[{\cite[Definition 2.19]{BGS95}}]
Let $\Sigma$ be a fan in $N_\R$ and let $\sigma\subseteq N_\R$ be a cone. By $\Sigma_{|\sigma}$, we denote the set of cones of $\Sigma$ contained in $\sigma$.
\end{dfn}
\begin{rem}[{\cite[Remark 2.20]{BGS95}}]
The set $\Sigma_{|\sigma}$ is a fan. It is a subdivision of $\sigma$ if and only if its support $\bigcup_{\tau\in\Sigma_{|\sigma}}$ is $\sigma$.
\end{rem}
\begin{lem}[{\cite[Theorem 2.22]{BGS95}}]\label{Hilbert dominate minQterminal}
Any Hilbert basis resolution of three dimensional affine toric variety $X$ dominates a minimal terminal $\Q$-factorial model of $X$.
\end{lem}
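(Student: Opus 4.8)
The plan is to start from a Hilbert basis resolution $\Sigma$ of the three-dimensional cone $\sigma$ and to exhibit a minimal terminal $\Q$-factorial model that $X_\Sigma$ dominates; recall that for toric varieties ``$X_\Sigma$ dominates $X_{\Sigma'}$'' means precisely that $\Sigma$ refines $\Sigma'$. By Remark~\ref{canonical terminal} the minimal terminal $\Q$-factorial models of $X_\sigma$ are the simplicial terminal subdivisions of the canonical subdivision $\Sigma_\mathrm{c}$ whose rays lie on the bounded faces of $P:=\Conv(\sigma\cap N\setminus\{0\})$. The first point to record is that every primitive lattice point of a bounded face of $P$ already belongs to $\Hilb_N(\sigma)$: a bounded face $F$ of $P$ is supported by some $m\in M$ with $\langle m,\cdot\rangle\ge 1$ on $\sigma\cap N\setminus\{0\}$ and $\langle m,\cdot\rangle=1$ on $F$, so a decomposition $v=v_1+v_2$ of a point $v\in F\cap N$ with $v_1,v_2\in\sigma\cap N\setminus\{0\}$ would give $1=\langle m,v\rangle\ge 2$. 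Hence $\Gen(\Sigma)=\Hilb_N(\sigma)$ contains the ray set of every minimal terminal $\Q$-factorial model; it remains to show that the rays of $\Sigma$ are arranged so that $\Sigma$ actually refines one of them.

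This I would do in two steps. First, $\Sigma$ refines $\Sigma_\mathrm{c}$: if a maximal cone $\tau=\Cone(v_1,v_2,v_3)$ of $\Sigma$ met the relative interiors of two distinct maximal cones of $\Sigma_\mathrm{c}$, it would cross their common face, which is the cone over a face of $P$ of dimension at most two (an edge or a vertex, as $\dim\sigma=3$); every primitive lattice point of that face lies on a bounded face of $P$, hence is a ray of $\Sigma$ by the preceding paragraph, and one shows that such a ray --- or an endpoint of the edge --- must then lie in the relative interior of $\tau$ or of a facet of $\tau$, contradicting the fan axioms. Second, fix a maximal cone $C=\Cone(F)$ of $\Sigma_\mathrm{c}$, where $F$ is a bounded facet --- a lattice polygon --- of $P$, and look at the induced smooth subdivision $\Sigma_{|C}$ of the canonical three-dimensional cone $C$; one shows that $\Sigma_{|C}$ refines $\Cone(T_F)$ for some triangulation $T_F$ of $F$ into empty lattice triangles. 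Gluing the $T_F$ over all bounded facets of $P$ (the pieces automatically agreeing along the shared edges of $P$, whose lattice points are already rays) yields a minimal terminal $\Q$-factorial model $\Sigma'$ refined by $\Sigma$.

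The main obstacle I anticipate is this last step --- producing, for each bounded facet $F$, a terminal triangulation $T_F$ of the polygon $F$ that the full subdivision $\Sigma_{|C}$ refines. The difficulty is that a Hilbert basis resolution may use rays lying strictly above $F$, namely Hilbert basis elements that sit in the interior of $P$ rather than on $\partial P$ (these do occur), so $\Sigma_{|C}$ need not project to a lattice triangulation of $F$ along the support functional of $F$. One must argue that the smoothness of the cones of $\Sigma_{|C}$ constrains these extra rays enough that $\Sigma_{|C}$ still refines the cone over a triangulation of $F$ into empty lattice triangles, thereby reducing the matter to the combinatorics of triangulations of planar lattice polygons. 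This is exactly where the hypothesis $\dim\sigma=3$ enters: in higher dimension the bounded faces are no longer polygons and the statement fails.
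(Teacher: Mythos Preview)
The paper does not prove this lemma at all: it is quoted verbatim as \cite[Theorem 2.22]{BGS95} and used as a black box in the proof of Theorem~\ref{moderate is minQterminal}. There is therefore no ``paper's own proof'' to compare your attempt against.

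That said, your sketch is not yet a proof, and you have correctly located the serious gap yourself. Two remarks. First, your Step~2 argument (that $\Sigma$ refines $\Sigma_{\mathrm c}$) is too vague as written: a maximal smooth cone $\tau\in\Sigma$ can cross a two-dimensional wall of $\Sigma_{\mathrm c}$ without any lattice point of that wall landing in the relative interior of $\tau$ or of a facet of $\tau$; the wall's generating rays may lie entirely outside $\tau$. You need a genuine convexity/support-function argument here, not an incidence count. Second, the obstacle you flag in Step~3 is exactly the heart of the matter, and it is not resolved by ``the smoothness of the cones of $\Sigma_{|C}$ constrains these extra rays enough''; in \cite{BGS95} the proof proceeds quite differently, via an inductive reduction using star subdivisions at Hilbert basis elements and a careful control of multiplicities specific to dimension three, rather than by directly extracting a triangulation of each bounded facet. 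If you want to carry your geometric approach through, you would need to show that the image of $\Sigma_{|C}$ under the projection along the support functional of $F$ is a polytopal subdivision of $F$ refining a lattice triangulation with no new vertices --- and that statement is false in general for arbitrary smooth refinements, so the Hilbert basis hypothesis must be used in a sharper way than you have so far.
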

\begin{thm}\label{moderate is minQterminal}
Let $X_\sigma$ be a three-dimensional affine toric variety. If $X_\sigma$ admits a moderate toric resolution, then it is a minimal terminal $\Q$-factorial model of $X_\sigma$.
\end{thm}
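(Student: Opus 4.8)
The plan is to exhibit the given moderate toric resolution as a refinement of a minimal terminal $\Q$-factorial model and then to show that the refinement is trivial. So let $\Sigma$ be a moderate toric resolution of $\sigma$. By Proposition \ref{moderate is Hilbert} it is a Hilbert basis resolution, hence by Lemma \ref{Hilbert dominate minQterminal} it dominates a minimal terminal $\Q$-factorial model $X_{\Sigma'}$ of $X_\sigma$; equivalently, $\Sigma$ refines $\Sigma'$, and by Remark \ref{canonical terminal} every maximal cone of $\Sigma'$ is a three-dimensional simplicial terminal cone. It therefore suffices to prove that for each maximal cone $\tau'$ of $\Sigma'$ the subdivision $\Sigma_{|\tau'}$ consists only of the faces of $\tau'$, since then $\Sigma=\Sigma'$ and $X_\Sigma=X_{\Sigma'}$ is the desired model.

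Fix such a $\tau'$. Since $\Sigma$ refines $\Sigma'$, the fan $\Sigma_{|\tau'}$ is a smooth subdivision of $\tau'$, hence a toric resolution of $X_{\tau'}$, and I claim it is in fact a moderate toric resolution of $\tau'$. The key observation is that the moderate condition for $\Sigma$ says precisely that, for every maximal cone $\eta$ of $\Sigma$, the linear functional $l_\eta$ satisfies $l_\eta(w)>0$ for every primitive ray generator $w$ of $\sigma$; since any $v\in\tau'\setminus\{0\}$ lies in $\sigma$ and is a nonnegative, not-all-zero combination of the ray generators of $\sigma$, linearity of $l_\eta$ gives $l_\eta(v)>0$. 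Applying this to the ray generators of $\tau'$ and to the maximal cones $\eta$ of $\Sigma_{|\tau'}$ (which are among the maximal cones of $\Sigma$) shows that $\{l_\eta=1\}$ meets every one-dimensional face of $\tau'$, so $\Sigma_{|\tau'}$ is moderate over $\tau'$. Moreover $\Sigma_{|\tau'}$ is a Hilbert basis resolution of $\tau'$: each of its ray generators lies in $\Gen(\Sigma)\cap\tau'=\Hilb_N(\sigma)\cap\tau'$, and any element of $\Hilb_N(\sigma)$ lying in $\tau'$ is already indecomposable in $\tau'\cap N\subseteq\sigma\cap N$, hence lies in $\Hilb_N(\tau')$; together with the inclusion $\Hilb_N(\tau')\subseteq\Gen(\Sigma_{|\tau'})$, valid for any toric resolution of $\tau'$ (part of the characterization of the Hilbert basis recalled above), this gives $\Gen(\Sigma_{|\tau'})=\Hilb_N(\tau')$.

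Now if $\tau'$ were singular, then $X_{\tau'}$ would be a three-dimensional terminal $\Q$-factorial singular affine toric variety admitting a moderate toric resolution, contradicting Corollary \ref{3Qterminal no moderate}. Hence $\tau'$ is smooth, so $\Hilb_N(\tau')=\Gen(\tau')$ consists exactly of the three ray generators of $\tau'$; combined with $\Gen(\Sigma_{|\tau'})=\Hilb_N(\tau')$, this forces $\Sigma_{|\tau'}$ to have no rays other than those of $\tau'$, whence $\Sigma_{|\tau'}$ consists only of the faces of $\tau'$. This holds for every maximal cone of $\Sigma'$, so $\Sigma=\Sigma'$ and $X_\Sigma$ is a minimal terminal $\Q$-factorial model of $X_\sigma$.

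The step I expect to be the crux is the passage from the moderate condition for $\Sigma$ over $\sigma$ to a moderate condition for $\Sigma_{|\tau'}$ over $\tau'$: the one-dimensional faces of $\tau'$ need not be faces of $\sigma$, so one cannot quote the hypothesis directly but must first upgrade it to the strict positivity of each $l_\eta$ on all of $\sigma\setminus\{0\}$ and then use $\tau'\subseteq\sigma$. The remaining ingredients — that $\Sigma_{|\tau'}$ is a subdivision of $\tau'$, the description of the maximal cones of a minimal terminal $\Q$-factorial model, and the inclusion $\Hilb_N(\sigma)\cap\tau'\subseteq\Hilb_N(\tau')$ — are routine.
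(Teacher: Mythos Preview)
Your proof is correct and follows essentially the same line as the paper's: reduce to a minimal terminal $\Q$-factorial model via Lemma~\ref{Hilbert dominate minQterminal}, show that the restriction of $\Sigma$ to each maximal cone of that model is again moderate (you justify this passage more carefully than the paper does, via the positivity of $l_\eta$ on all of $\sigma\setminus\{0\}$), invoke Corollary~\ref{3Qterminal no moderate} to force smoothness, and conclude that the subdivision is trivial. Your separate verification that $\Sigma_{|\tau'}$ is a Hilbert basis resolution of $\tau'$ is correct but redundant, since it already follows from Proposition~\ref{moderate is Hilbert} once $\Sigma_{|\tau'}$ is known to be moderate over $\tau'$.
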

\begin{proof}
Let $X_\Sigma$ be a moderate toric resolution of $X_\sigma$ with the corresponding fan $\Sigma$. Since a moderate toric resolution is a Hilbert basis resolution (Proposition \ref{moderate is Hilbert}), there exists a minimal terminal simplicial model $\Sigma_\text{t}$ of $\sigma$ which is dominated by $\Sigma$ (Lemma \ref{Hilbert dominate minQterminal}). Now, assuming that $\Sigma_\text{t}$ is singular, we can take a singular three-dimensional cone $\sigma_0$ of $\Sigma_\text{t}$. Since $\Sigma$ is a resolution of $\Sigma_\text{t}$, $\Sigma_{|\sigma_0}$ is a resolution of $\sigma_0$. Also, for any nondegenerate cone $\tau\in\Sigma_{|\sigma_0}\subseteq\Sigma$, since $\Sigma$ is a moderate toric resolution, the affine hyperplane spanned by $\Gen(\tau)$ intersects with every one-dimensional face of $\sigma$. Therefore, it also intersects with every one-dimensional face of $\sigma_0$ which contained in $\sigma$. It means that the $\Sigma_{|\sigma_0}$ is a moderate toric resolution of $\sigma_0$, but this contradicts Corollary \ref{3Qterminal no moderate}, because $\sigma_0$ is a three-dimensional terminal simplicial singular cone. Hence, $\Sigma_\text{t}$ is smooth and $\Gen(\Sigma)=\Gen(\Sigma_\text{t})$. Then $\Sigma=\Sigma_\text{t}$ since $\Sigma$ is a subdivision of simplicial fan $\Sigma_\text{t}$.
\end{proof}
\begin{rem}
In any dimension, if a minimal terminal $\Q$-factorial model of $X$ is smooth, then it is a moderate toric resolution of $X$ \cite[Proposition 6.10]{CMDY24}.
\end{rem}
\begin{cor}\label{moderate is crepant}
Let $X_\sigma$ be a three-dimensional canonical affine toric variety. If $X_\sigma$ admits a moderate toric resolution, then it is a crepant resolution of $X_\sigma$.
\end{cor}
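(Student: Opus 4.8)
The plan is to derive this from Theorem~\ref{moderate is minQterminal} (which already identifies a moderate toric resolution with a minimal terminal $\Q$-factorial model) together with the description of minimal terminal $\Q$-factorial models in Remark~\ref{canonical terminal}. So let $X_\Sigma\to X_\sigma$ be a moderate toric resolution of $X_\sigma$. By Theorem~\ref{moderate is minQterminal}, $X_\Sigma$ is a minimal terminal $\Q$-factorial model of $X_\sigma$, and being a resolution it is smooth; hence it only remains to show that the birational morphism $X_\Sigma\to X_\sigma$ is crepant, for then $X_\Sigma$ is a crepant resolution of $X_\sigma$.

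To prove crepancy I would argue as follows. Write $\Theta\coloneqq\Conv(\sigma\cap N\setminus\{0\})$ and let $u_1,\dots,u_m$ be the primitive generators of the rays of $\sigma$. By Remark~\ref{canonical terminal}, $\Sigma$ is a subdivision of the canonical model $\Sigma_\text{c}$ of $\sigma$ all of whose minimal generators lie on the bounded faces of $\Theta$. Since $X_\sigma$ is canonical, its canonical model is $X_\sigma$ itself, i.e.\ $\Sigma_\text{c}=\{\text{faces of }\sigma\}$. Unwinding the description of $\Sigma_\text{c}$ as the fan whose cones are generated by the bounded faces of $\Theta$, this forces $\Theta$ to have a unique maximal bounded face, and that face must be $\Conv(u_1,\dots,u_m)$; consequently every bounded face of $\Theta$ is contained in $\Conv(u_1,\dots,u_m)$. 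In particular $u_1,\dots,u_m$ lie on a common affine hyperplane, so $X_\sigma$ is $\Q$-Gorenstein, and this hyperplane is $\{n\in N_\R\mid\ell(n)=1\}$ for the unique linear functional $\ell$ with $\ell(u_i)=1$. Every minimal generator $w$ of a cone of $\Sigma$ therefore satisfies $\ell(w)=1$, i.e.\ the prime divisor over $X_\sigma$ attached to $w$ has discrepancy $\ell(w)-1=0$. Since every exceptional prime divisor of $X_\Sigma\to X_\sigma$ is of this form, the morphism is crepant, and the corollary follows.

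The step that requires genuine care is the geometric claim that, when $X_\sigma$ is canonical, every bounded face of $\Theta$ lies on the single hyperplane $\Conv(u_1,\dots,u_m)$ --- equivalently, that $\Theta$ has no vertex $v$ with $\ell(v)>1$. This should be extracted carefully from Remark~\ref{canonical terminal}: the vertices of $\Theta$ are in bijection with the rays of $\Sigma_\text{c}=\{\text{faces of }\sigma\}$, i.e.\ with the rays of $\sigma$, and the vertex of $\Theta$ on the ray $\R_{\geq0}u_i$ is $u_i$ itself, since $\Theta$ meets that ray precisely in $\{c\,u_i\mid c\geq1\}$ by primitivity of $u_i$. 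Granting this, the remaining ingredients --- the identity ``discrepancy $=\ell(w)-1$'' for $\Q$-Gorenstein toric varieties, and the fact that a smooth crepant birational morphism is a crepant resolution --- are standard, so the corollary really is just the specialization of Theorem~\ref{moderate is minQterminal} to the canonical case.
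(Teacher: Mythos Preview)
Your proof is correct and follows essentially the same route as the paper: apply Theorem~\ref{moderate is minQterminal} to identify the moderate toric resolution with a minimal terminal $\Q$-factorial model, then invoke Remark~\ref{canonical terminal} to conclude that all minimal generators of $\Sigma$ lie on the affine hyperplane $H$ through $\Gen(\sigma)$, hence the resolution is crepant. The only difference is that the paper treats as evident the passage from ``minimal generators lie on the bounded faces of $\Conv(\sigma\cap N\setminus\{0\})$'' (the literal statement of Remark~\ref{canonical terminal}) to ``minimal generators lie on $H$'' under the canonicality hypothesis, whereas you spell this out via the identification $\Sigma_\text{c}=\{\text{faces of }\sigma\}$ and the resulting description of the vertices of $\Theta$; your extra care here is justified but not strictly necessary.
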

\begin{proof}
As mentioned in Remark \ref{canonical terminal}, the minimal terminal simplicial models of $\sigma$ correspond to the subdivisions of canonical model $\Sigma_\text{c}$ of $\sigma$ into simplicial terminal cones whose minimal generators belong to the affine hyperplane $H$ spanned by $\Gen(\sigma)$. Hence, from Theorem \ref{moderate is minQterminal}, the minimal generators of every one-dimensional cone appearing in a moderate toric resolution of $\sigma$ are contained in $H$. Therefore, it is a crepant resolution of $\sigma$.
\end{proof}

\subsection{Four-dimensional Gorenstein terminal $\Q$-factorial affine toric varieties which do not admit a Hilbert basis resolution}

As an analogue of Corollary \ref{4GorQterminal no moderate}, an example of a four-dimensional Gorenstein terminal $\Q$-factorial affine toric variety which does not admit a Hilbert basis resolution was given in \cite[Example 3.1]{BGS95}. In this section, we present the following theorem as a generalization of this example.
\begin{thm}\label{4 no Hilbert basis resolution}
Let $X_\sigma$ be a four-dimensional Gorenstein terminal $\Q$-factorial affine toric variety associated to the cone $\sigma$ generated by
\begin{equation*}
e_1=(1,0,0,0),e_2=(0,1,0,0),e_3=(0,0,1,0),e_4=(1,a,r-a,r)
\end{equation*}
where $\gcd(a,r)=1,1<a<r-a<r$ and $2\nmid r$ (Lemma \ref{MS84}, \cite[Section 5.1]{CLS11}). Then $X_\sigma$ does not admit a Hilbert basis resolution.
\end{thm}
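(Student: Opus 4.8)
The plan is to determine $\Hilb_N(\sigma)$ explicitly and then to show that a Hilbert basis resolution $\Sigma$ would be forced to consist of a specific list of smooth maximal cones which cannot be assembled into a fan with support $\sigma$. Since $\mult(\sigma)=|\det(e_1,e_2,e_3,e_4)|=r$, Proposition~\ref{property of multiplicity} gives $\#P_\sigma=r$, and the computation in the proof of Theorem~\ref{d dim no moderate} (with $(a_1,a_2,a_3,a_4)=(1,a,r-a,r)$) identifies $P_\sigma\setminus\{0\}$ with the points
\[
p_l=\left(1,\left\lceil\tfrac{al}{r}\right\rceil,\left\lceil\tfrac{(r-a)l}{r}\right\rceil,l\right)\qquad(1\le l\le r-1);
\]
in particular $p_1=(1,1,1,1)$, and $\gcd(a,r)=1$ forces each $p_l$ into the interior of $\sigma$ (all its barycentric coordinates with respect to $e_1,\dots,e_4$ are positive). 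Writing any element of $\sigma\cap N$ as a nonnegative integral combination of $e_1,\dots,e_4$ plus an element of $P_\sigma$, and noting that a lattice point of $\sigma$ with vanishing first coordinate lies in $\Cone(e_2,e_3)$, one checks that each $p_l$ is indecomposable, so $\Hilb_N(\sigma)=\{e_1,e_2,e_3,e_4\}\cup\{p_1,\dots,p_{r-1}\}$.

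Now suppose $\Sigma$ is a Hilbert basis resolution, so that $\Gen(\Sigma)=\Hilb_N(\sigma)$ and every cone of $\Sigma$ is smooth. The facet $\tau_0:=\Cone(e_1,e_2,e_3)$ of $\sigma$ is smooth and contains no element of $\Hilb_N(\sigma)$ other than $e_1,e_2,e_3$, so $\tau_0\in\Sigma$, and there is a unique maximal cone of $\Sigma$ having $\tau_0$ as a facet, namely $\Cone(e_1,e_2,e_3,v)$ with $|\det(e_1,e_2,e_3,v)|=|v_4|=1$; as $v\in\sigma$ and $v\notin\{e_1,e_2,e_3\}$, this forces $v=p_1$. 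Put $\sigma_1:=\Cone(e_1,e_2,e_3,p_1)$. Next I would run an induction along the two-dimensional cone $\Cone(e_2,e_3)$. Since $p_1$ is interior to $\sigma$, the facet $\Cone(e_2,e_3,p_1)$ of $\sigma_1$ has relative interior contained in the interior of $\sigma$, so it is shared with a second maximal cone $\Cone(e_2,e_3,p_1,w)$; because $e_1,e_2,e_3,p_1$ each have first coordinate $0$ or $1$, smoothness reduces to a $2\times 2$ determinant in the first and fourth coordinates, giving $|w_4-w_1|=1$, while $w$ must lie on the side of the hyperplane spanned by $e_2,e_3,p_1$ opposite to $e_1$, i.e.\ $w_4>w_1$; since $r\ge 5$ (from $1<a<r-a$), these force $w=p_2$. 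Iterating at the facet $\Cone(e_2,e_3,p_j)$ of $\Cone(e_2,e_3,p_{j-1},p_j)$, where $|\det(e_2,e_3,p_j,w)|=|w_4-jw_1|$ and the relevant supporting normal is $(j,0,0,-1)$, one is forced to have in $\Sigma$ the smooth maximal cones
\[
\sigma_j=\Cone(e_2,e_3,p_{j-1},p_j)\ (2\le j\le r-1),\qquad \sigma_r=\Cone(e_2,e_3,p_{r-1},e_4),
\]
a chain all containing $\Cone(e_2,e_3)$ and terminating at the face $\Cone(e_2,e_3,e_4)$ of $\sigma$.

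Finally, since $\mult(\sigma)=r$ and every cone of $\Sigma$ has multiplicity $1$, $\Sigma$ has exactly $r$ maximal cones; the cones $\sigma_1,\dots,\sigma_r$ produced above are pairwise distinct ($\sigma_1$ is the only one containing $e_1$, $\sigma_r$ the only one containing $e_4$, and the intermediate ones are distinguished by the pair $\{p_{j-1},p_j\}$), hence they are all of them. But $\Cone(e_1,e_2,p_1)$ is a facet of $\sigma_1$ whose relative interior lies in the interior of $\sigma$, so it must also be a facet of a second maximal cone of $\Sigma$; that cone would have $e_1$ among its ray generators, which is impossible, since none of $\sigma_2,\dots,\sigma_r$ does. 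This contradiction shows that $X_\sigma$ admits no Hilbert basis resolution.

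I expect the main obstacle to be the induction in the second paragraph: at each stage one must confirm that the listed cone is the \emph{unique} maximal cone of $\Sigma$ lying across the chosen wall (this is where the determinant-and-sign bookkeeping must be carried out with care, and where the hypotheses $1<a$ and $a<r-a$ are used, since they give $r\ge 5$, which rules out $e_4$ as a candidate until the very last wall), and one must check that the chain really terminates on the boundary facet $\Cone(e_2,e_3,e_4)$ rather than closing up earlier. The indecomposability statement underlying the description of $\Hilb_N(\sigma)$ in the first paragraph is the other point that requires a short separate argument.
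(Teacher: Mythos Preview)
Your chain $\sigma_1,\dots,\sigma_r$ along $\Cone(e_2,e_3)$ is correct and matches one of the two constructions the paper carries out. The fatal gap is the sentence ``since $\mult(\sigma)=r$ and every cone of $\Sigma$ has multiplicity $1$, $\Sigma$ has exactly $r$ maximal cones.'' Multiplicities are \emph{not} additive under subdivisions that introduce new rays: already in dimension three, the cone $\Cone((1,0,0),(0,1,0),(1,1,2))$ has multiplicity $2$, but its Hilbert basis resolution (the star subdivision at the interior Hilbert basis element $(1,1,1)$) has three maximal cones. In the situation at hand any Hilbert basis resolution would in fact be forced to contain at least $2r$ maximal cones, so your counting argument cannot close. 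Once the count is removed, the contradiction you draw from the wall $\Cone(e_1,e_2,p_1)$ evaporates: there is no reason the neighbouring maximal cone across that wall should be one of $\sigma_2,\dots,\sigma_r$. A further warning sign is that your argument never uses the hypothesis $2\nmid r$.

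What the paper does instead is to run your forced-chain argument a \emph{second} time, starting from the smooth boundary facet $\Cone(e_1,e_2,e_4)$ and walking along the edge $\Cone(e_1,e_4)$. Using the determinant formulae of Lemma~\ref{determinant}, one finds that the cones along this second chain are $\Cone(e_1,p_{l_{i}},p_{l_{i+1}},e_4)$, where $l_i\equiv bi\pmod r$ and $ab\equiv 1\pmod r$; the hypothesis $1<a<r-a$ guarantees $b\neq 1,r-1$, so this chain visits the $p_l$ in a genuinely different order from yours. The contradiction is then obtained not by counting but by exhibiting two forced $2$-dimensional cones, $\Cone(p_{(r-1)/2},p_{(r+1)/2})$ from your chain and $\Cone(p_{l_{(r-1)/2}},p_{l_{(r+1)/2}})$ from the second chain, whose intersection is the ray through $e_1+e_2+e_3+e_4$; this ray is not a face of either, so the fan axiom fails. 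This is precisely where $2\nmid r$ enters.
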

To prove this theorem, we first show a lemma. We keep the notation in Theorem \ref{4 no Hilbert basis resolution}.

Let $P_\sigma=\{n\in N\mid n=\sum_{i=1}^4\lambda_ie_i,0\leq\lambda_i<1\}$. Then from Proposition \ref{property of multiplicity}, since $\#P_\sigma=\mult(\sigma)=r$, the set
\begin{equation*}
P_\sigma\setminus\{0\}=\{n\in N\mid n=\sum_{i=1}^4\lambda_ie_i,0\leq\lambda_i<1\}\setminus\{0\}
\end{equation*}
consists of the following $r-1$ points:
\begin{align*}
p_l&=\left(\left\lceil\frac{l}{r}\right\rceil-\frac{l}{r}\right)e_1+\left(\left\lceil\frac{al}{r}\right\rceil-\frac{al}{r}\right)e_2+\left(\left\lceil\frac{(r-a)l}{r}\right\rceil-\frac{(r-a)l}{r}\right)e_3+\frac{l}{r}e_4 \\
&=\left(\left\lceil\frac{l}{r}\right\rceil,\left\lceil\frac{al}{r}\right\rceil,\left\lceil\frac{(r-a)l}{r}\right\rceil,l\right)\ (1\leq l\leq r-1).
\end{align*}
\begin{lem}\label{determinant}
For $1\leq l,l'\leq r-1$,
\begin{align}
&\det(e_1,e_2,e_3,p_l)=l, \notag \\
&\det(p_l,e_2,e_3,p_{l'})=l'-l, \notag \\
&\det(p_l,e_2,e_3,e_4)=r-l, \notag \\
&\det(e_1,p_l,e_3,e_4)=r\left\lceil\frac{al}{r}\right\rceil-al=r+(r-a)l-r\left\lceil\frac{(r-a)l}{r}\right\rceil, \notag \\
\label{nontrivial}
&\det(e_1,p_l,p_{l'},e_4)=r\left(\left\lceil\frac{(r-a)l'}{r}\right\rceil-\left\lceil\frac{(r-a)l}{r}\right\rceil\right)-(r-a)(l'-l), \tag{$\ast$} \\
&\det(e_1,e_2,p_l,e_4)=r\left\lceil\frac{(r-a)l}{r}\right\rceil-(r-a)l \notag.
\end{align}
\end{lem}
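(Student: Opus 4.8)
The plan is to derive all six formulas by direct cofactor expansion of $4\times4$ determinants along the columns that equal standard basis vectors, each time reducing to a $2\times2$ (or $3\times3$) determinant. Throughout I write $\det(v_1,v_2,v_3,v_4)$ for the determinant of the matrix with columns $v_1,\dots,v_4$, and abbreviate $\alpha_l\coloneqq\lceil al/r\rceil$ and $\beta_l\coloneqq\lceil (r-a)l/r\rceil$, so that $p_l=(1,\alpha_l,\beta_l,l)$ for $1\le l\le r-1$, using $\lceil l/r\rceil=1$ in that range.

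First I would isolate the one input that is not pure linear algebra: the identity $\alpha_l+\beta_l=l+1$ for all $1\le l\le r-1$. Since $\gcd(a,r)=\gcd(r-a,r)=1$ and $r\nmid l$, neither $al/r$ nor $(r-a)l/r$ is an integer, while their sum is the integer $l$; hence $\lfloor al/r\rfloor+\lfloor (r-a)l/r\rfloor=l-1$, and therefore $\alpha_l+\beta_l=(\lfloor al/r\rfloor+1)+(\lfloor (r-a)l/r\rfloor+1)=l+1$.

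Next come the routine identities. For $\det(e_1,e_2,e_3,p_l)$, $\det(p_l,e_2,e_3,p_{l'})$, $\det(p_l,e_2,e_3,e_4)$ and $\det(e_1,e_2,p_l,e_4)$, expanding along the columns that equal $e_1,e_2,e_3$ (respectively $e_1,e_2$) leaves a $2\times2$ determinant formed from the two remaining coordinate rows, which evaluates immediately to $l$, $l'-l$, $r-l$ and $r\beta_l-(r-a)l$, respectively. For $\det(e_1,p_l,e_3,e_4)$ the same procedure leaves the $3\times3$ determinant with rows $(\alpha_l,0,a)$, $(\beta_l,1,r-a)$, $(l,0,r)$; expanding along its middle column gives $r\alpha_l-al=r\lceil al/r\rceil-al$, and substituting $\alpha_l=l+1-\beta_l$ produces the second stated form $r+(r-a)l-r\lceil (r-a)l/r\rceil$.

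The only formula requiring a little care is the one marked $(\ast)$. Expanding $\det(e_1,p_l,p_{l'},e_4)$ along the $e_1$-column leaves the $3\times3$ determinant whose rows are $R_1=(\alpha_l,\alpha_{l'},a)$, $R_2=(\beta_l,\beta_{l'},r-a)$ and $R_3=(l,l',r)$. Replacing $R_2$ by $R_2+R_1-R_3$ and invoking $\alpha_l+\beta_l=l+1$ turns that row into $(1,1,0)$ without changing the determinant; expanding along it reduces to the two $2\times2$ determinants $\det\!\bigl(\begin{smallmatrix}\alpha_l&a\\ l&r\end{smallmatrix}\bigr)$ and $\det\!\bigl(\begin{smallmatrix}\alpha_{l'}&a\\ l'&r\end{smallmatrix}\bigr)$, giving $r(\alpha_l-\alpha_{l'})+a(l'-l)$; a final substitution $\alpha_l=l+1-\beta_l$ rewrites this as $r(\beta_{l'}-\beta_l)-(r-a)(l'-l)$, which is the asserted value. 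I do not expect any genuine obstacle here; the only things demanding attention are keeping track of the signs in the cofactor expansions and the back-and-forth between the $\alpha$- and $\beta$-normalizations via $\alpha_l+\beta_l=l+1$.
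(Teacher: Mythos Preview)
Your proof is correct and follows essentially the same approach as the paper: both reduce $(\ast)$ to the $3\times3$ minor obtained by expanding along the $e_1$-column and exploit the identity $\lceil al/r\rceil+\lceil(r-a)l/r\rceil=l+1$ to pass between the two forms. Your row operation $R_2\mapsto R_2+R_1-R_3$ is a tidy shortcut compared to the paper's direct expansion, but the underlying argument is the same.
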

\begin{proof}
We only show \eqref{nontrivial}, since the other equalities are straightforward. Noting that $\gcd(a,r)=1$, we get
\begin{align*}
\det(e_1,p_l,p_{l'},e_4)&=
\begin{vmatrix}
\left\lceil\frac{al}{r}\right\rceil & \left\lceil\frac{(r-a)l}{r}\right\rceil & l \\
\left\lceil\frac{al'}{r}\right\rceil & \left\lceil\frac{(r-a)l'}{r}\right\rceil & l' \\
a & r-a & r
\end{vmatrix} \\
&=\left\lceil\frac{al}{r}\right\rceil\left(l'+\left\lceil\frac{-al'}{r}\right\rceil\right)r+\left\lceil\frac{(r-a)l}{r}\right\rceil l'a+\left\lceil\frac{al'}{r}\right\rceil(rl-al) \\
&\quad -\left\lceil\frac{(r-a)l'}{r}\right\rceil al-\left\lceil\frac{al}{r}\right\rceil(rl'-al')-\left(l+\left\lceil\frac{-al}{r}\right\rceil\right)\left\lceil\frac{al'}{r}\right\rceil r\\
&=\left\lceil\frac{al}{r}\right\rceil\left(1-\left\lceil\frac{al'}{r}\right\rceil\right)r+al'(l+1)-al(1+l')-\left(1-\left\lceil\frac{al}{r}\right\rceil\right)\left\lceil\frac{al'}{r}\right\rceil r \\
&=r\left(\left\lceil\frac{al}{r}\right\rceil-\left\lceil\frac{al'}{r}\right\rceil\right)+a(l'-l) \\
&=r\left(\left(l+1-\left\lceil\frac{(r-a)l}{r}\right\rceil\right)-\left(l'+1-\left\lceil\frac{(r-a)l'}{r}\right\rceil\right)\right)+a(l'-l) \\
&=r\left(\left\lceil\frac{(r-a)l'}{r}\right\rceil-\left\lceil\frac{(r-a)l}{r}\right\rceil\right)-(r-a)(l'-l).
\end{align*}
\end{proof}
\begin{proof}[Proof of Theorem \ref{4 no Hilbert basis resolution}]
We follow arguments in \cite[Example 3.1 ($a=3,r=7$)]{BGS95}. For each $1\leq i\leq r-1$, there exists a unique $1\leq l_i\leq r-1$ such that $l_i\equiv bi\mod r$. Here $b$ is an integer which satisfies $ab\equiv1\mod r,\ 1\leq b\leq r-1$. For this integer $l_i$, we have $-(r-a)l_i\equiv i\mod r$, and hence
\begin{equation*}
r\left\lceil\frac{(r-a)l_i}{r}\right\rceil-(r-a)l_i=i.
\end{equation*}
Assume that $\sigma$ admits a Hilbert basis resolution $\Sigma$. Since $\tau\coloneqq\Cone(e_1,e_2,e_4)$ is smooth, there exists a four-dimensional cone of $\Sigma$ that contains $\tau$ as a face. A candidate for such a cone is $\Cone(e_1,e_2,p_l,e_4)$ satisfying $\det(e_1,e_2,p_l,e_4)=1$. From Lemma \ref{determinant}, since $\det(e_1,e_2,p_l,e_4)=r\lceil(r-a)l/r\rceil-(r-a)l$, we have $l=l_1$. Thus, $\Sigma$ contains two four-dimensional cones with $\Cone(e_1,e_4,p_{l_1})$ as their common face. The cone $\Cone(e_1,e_2,p_{l_1},e_4)$ is one of them, and the other is either $\Cone(e_1,p_{l_1},e_3,e_4)$ or $\Cone(e_1,p_{l_1},p_l,e_4)$. Since $\det(e_1,p_{l_1},e_3,e_4)=r-1$, this cone is $\Cone(e_1,p_{l_1},p_l,e_4)$ satisfying $\det(e_1,p_{l_1},p_l,e_4)=1$. Again from Lemma \ref{determinant},
\begin{align*}
\det(e_1,p_{l_1},p_l,e_4)&=r\left(\left\lceil\frac{(r-a)l}{r}\right\rceil-\left\lceil\frac{(r-a)l_1}{r}\right\rceil\right)-(r-a)(l-l_1) \\
&=-1+r\left\lceil\frac{(r-a)l}{r}\right\rceil-(r-a)l,
\end{align*}
so $l=l_2$. Consequently, $\Sigma$ contains $\Cone(e_1,p_{l_1},p_{l_2},e_4)$.

Similarly, if $\Sigma$ contains $\Cone(e_1,p_{l_{i-1}},p_{l_i},e_4)\ (2\leq i\leq r-2)$. Then there exists two four-dimensional cones of $\Sigma$ that contain $\Cone(e_1,e_4,p_{l_i})$ as their common face. The cone $\Cone(e_1,p_{l_{i-1}},p_{l_i},e_4)$ is one of them, and the other is either $\Cone(e_1,e_2,p_{l_i},e_4)$, $\Cone(e_1,p_{l_i},e_3,e_4)$ or $\Cone(e_1,p_{l_i},p_l,e_4)$. Since $\det(e_1,e_2,p_{l_i},e_4)=i\neq1$ and $\det(e_1,p_{l_i},e_3,e_4)=r-i\neq1$, this cone is $\Cone(e_1,p_{l_i},p_l,e_4)$ satisfying $\det(e_1,p_{l_i},p_l,e_4)=1$. Now from Lemma \ref{determinant},
\begin{equation*}
\det(e_1,p_{l_i},p_l,e_4)=-i+r\left\lceil\frac{(r-a)l}{r}\right\rceil-(r-a)l,
\end{equation*}
so $l=l_{i+1}$. Consequently, $\Sigma$ contains $\Cone(e_1,p_{l_i},p_{l_{i+1}},e_4)$. Also by repeating this operation, we see that $\Sigma$ contains $\Cone(e_1,p_{l_{r-1}},e_3,e_4)$. In particular, $\Sigma$ contains $\Cone(p_{l_{(r-1)/2}},p_{l_{(r+1)/2}})$ since it is a face of $\Cone(e_1,p_{l_{(r-1)/2}},p_{l_{(r+1)/2}},e_4)$.

If we apply the above argument for $\Cone(e_1,e_2,e_3)$, we see that $\Sigma$ contains $\Cone(e_1,e_2,e_3,p_1),\Cone(p_l,e_2,e_3,p_{l+1})\ (1\leq l\leq r-2)$ and $\Cone(p_{r-1},e_2,e_3,e_4)$. In particular, $\Sigma$ contains $\Cone(p_{(r-1)/2},p_{(r+1)/2})$.

Since $b\neq1,r-1$, $\Cone(p_{l_{(r-1)/2}},p_{l_{(r+1)/2}})\neq\Cone(p_{(r-1)/2},p_{(r+1)/2})$. We see that the intersection of $\Cone(p_{l_{(r-1)/2}},p_{l_{(r+1)/2}})$ and $\Cone(p_{(r-1)/2},p_{(r+1)/2})$ is a one-dimensional cone $\R_{\geq0}\cdot e$, where
\begin{equation*}
e=p_{l_{r-1/2}}+p_{l_{r+1/2}}=p_{r-1/2}+p_{r-1/2}=(2,a+1,r-a+1,r)=\sum_{j=1}^4e_j.
\end{equation*}
But this cone is not a face of $\Cone(p_{l_{(r-1)/2}},p_{l_{(r+1)/2}})$ and $\Cone(p_{(r-1)/2},p_{(r+1)/2})$. Therefore, $\sigma$ does not admit a moderate toric resolution.
\end{proof}
\begin{rem}
There exists a four-dimensional Gorenstein terminal $\Q$-factorial singular affine toric variety which admits a Hilbert basis resolution. For example, in the notation of Theorem \ref{4 no Hilbert basis resolution}, when $a=1$ and $r=3$, we can compute a toric resolution of $X_\sigma$ by using Macaulay2 \cite{M2} as follows:
\begin{verbatim}
i1 : loadPackage"NormalToricVarieties"

o1 = NormalToricVarieties

o1 : Package

i2 : rayList={{1,0,0,0},{0,1,0,0},{0,0,1,0},{1,1,2,3}};coneList={{0,1,2,3}};

i4 : X=normalToricVariety(rayList,coneList);Y=makeSmooth X;rays Y,max Y

o6 = ({{1, 0, 0, 0}, {0, 1, 0, 0}, {0, 0, 1, 0},

     {1, 1, 2, 3}, {1, 1, 1, 1}, {1, 1, 2, 2}},

     {{0, 1, 2, 4}, {0, 1, 3, 4},{0, 2, 3, 5}, {0, 2, 4, 5},

     {0, 3, 4, 5}, {1, 2, 3, 5}, {1, 2, 4, 5}, {1, 3, 4, 5}})

o6 : Sequence
\end{verbatim}
The obtained toric resolution correspond to the fan consisting of four-dimensional cones
\begin{align*}
&\Cone(e_1,e_2,e_3,p_1),\Cone(e_1,e_2,e_4,p_1),\Cone(e_1,e_3,e_4,p_1),\Cone(e_1,e_3,p_1,p_2) \\
&\Cone(e_1,e_4,p_1,p_2),\Cone(e_2,e_3,e_4,p_2),\Cone(e_2,e_3,p_1,p_2),\Cone(e_2,e_4,p_1,p_2)
\end{align*}
and their faces. On the other hand, the Hilbert basis of $\sigma$ consists of the generators $e_1,e_2,e_3,e_4$ of $\sigma$ and $p_1=(1,1,1,1),p_2=(1,1,2,2)$. Thus, the above toric resolution happens to be a Hilbert basis resolution of $\sigma$.
\end{rem}

\bibliographystyle{alpha}
\bibliography{reference}

\end{document}